\newenvironment{manualtheorem}[1]{%
	\manualtheoreminner
}{\endmanualtheoreminner}
\newenvironment{manualcorollary}[1]{%
	\manualcorollaryinner
}{\endmanualcorollaryinner}
\numberwithin{exercise}{section}
\numberwithin{notation}{section}
\newtheorem{lemma}{Lemma}
\numberwithin{lemma}{section}
\numberwithin{definition}{section}
\newtheorem{proposition}{Proposition}
\numberwithin{proposition}{section}
\numberwithin{theorem}{section}
\numberwithin{example}{section}
\numberwithin{question}{section}
\numberwithin{remark}{section}
\newtheorem{conjecture}{Conjecture}
\newcommand{\EE}{{\mathbb E}}
\newcommand{\CC}{{\mathbb C}}
\newcommand{\PP}{{\mathbb P}}
\newcommand{\HH}{{\mathcal H}}
\newcommand{\ZZ}{{\mathbb Z}}
\theoremstyle{definition}
\newcommand{\address}{{
		\bigskip
		\footnotesize
		
		 \textsc{Department of Mathematics, The Graduate Center of CUNY, 365 Fifth Ave., New York, NY 10016}\par\nopagebreak
		\textit{E-mail address:} \texttt{cchang1@gradcenter.cuny.edu}

}}
\title{Hybrid Statistics of a Random Model of Zeta over Intervals of Varying Length}
\author{Christine Chang}
\date{}
\begin{document}
	\maketitle
			\begin{abstract}
	Arguin, Dubach \&  Hartung recently conjectured that an intermediate regime exists between IID and log-correlated statistics for extreme values of a random model of the Riemann zeta function. For the same model, we prove a matching upper and lower tail for the distribution of its maximum. This tail interpolates between that of the two aforementioned regimes. We apply the result to yield a new sharp estimate on moments over short intervals, generalizing a result by Harper. In particular, we observe a hybrid regime for moments with a distinctive transition to the IID regime for intervals of length larger than  $\exp(\sqrt{\log \log T})$. 
\end{abstract}
	\section{Introduction}
	
	The Riemann zeta function is defined for $\Re (s) > 1$ where $s \in \CC$ as 
	
	\begin{equation} \label{eulerproduct}
				\zeta(s) = \sum_{n\geq 1}  \frac{1}{n^s} = \prod_{p \geq 1} \frac{1}{1-p^{-s}},
	\end{equation}
	and by analytic continuation otherwise. 
Fyodorov, Hiary \& Keating \cite{FHK}  and	Fyodorov \& Hiary \cite{FK}  were the first to study the local maximum of zeta over smaller, randomly chosen intervals.  Their conjecture gives an upper bound on the distribution of the local maximum of zeta over an interval of order one. 
Let $\tau$ be uniformly distributed on $[T, 2T]$, $y >0$, and $y=O \left(\frac{\log \log T}{\log \log \log T} \right)$. Arguin, Bourgade \& Radziwill (\cite{ABR}, \cite{ABR2}) prove the upper and lower bound of the conjecture and show that  
	\begin{equation}  \label{ABR2a}
		\PP \left( \max_{|h| \leq 1} |\zeta(1/2+i (\tau+h)) |  > \frac{ \log T}{ (\log \log T)^{3/4}} e^y \right) \asymp ye^{-2y}e^{-y^2/\log \log T}.
	\end{equation}

	There is however a discrepancy in the subleading order exponent and right tail when considering the maximum over intervals of size $(\log T)^{\theta}$, where $\theta > 0$ is fixed. Let $y=o \left(\frac{\log \log T}{\log \log \log T} \right)$. Then for $0 \leq \theta < 3$, Arguin \& Bailey \cite{AB} prove that 
	\begin{equation} 
		\PP \left(	\max_{|h| \leq (\log T)^{\theta}} |\zeta(1/2 + i(\tau + h))| >   \frac{ (\log T)^{\sqrt{1+\theta}}}{ (\log \log T)^{\frac{1}{4 \sqrt{1+\theta}}}} e^y \right) \ll e^{-2\sqrt{1+\theta}y} e^{-y^2/\log \log T}. \label{AB_theorem}
	\end{equation} 
The notation $\ll$ denotes big O notation, further explained in Section \ref{notation}. 
	Notice that as $\theta \to 0$, there is a jump discontinuity between \eqref{ABR2a} and \eqref{AB_theorem}  (i.e., when $\theta = 0$ and $\theta > 0$, respectively).  The interval of length $(\log T)^{\theta}$ approaches an interval of length one, but the subleading order exponent jumps from $\frac{1}{4\sqrt{1+\theta}}$ to $3/4$;  the right tail  gains a factor of $y$. The discrepancy between the subleading order terms was addressed recently for a random model of zeta -- Arguin, Dubach \& Hartung \cite{ADH} introduced the expression $\frac{1+2\alpha}{4\sqrt{1+\theta}}$ where $\alpha \in (0,1)$ and $\theta=t^{-\alpha}$. This term interpolates between $\frac{1}{4\sqrt{1+\theta}}$ and $3/4$. The same subleading order prefactor, $1+2\alpha$, was given in the random energy model setting \cite{Kis, SchmKis}. 
	
	 The main result of this paper (Theorem \ref{upperlowerbd}) addresses the right tail discontinuity. We continue the work of \cite{ADH} on a random model of $|\zeta|$, defined as  
	\begin{equation} \label{random model1} 
		R_T(h)= \exp\left({\sum_{p \leq T}}  \Re \dfrac{(G_p p^{-ih})}{p^{1/2}}\right),\end{equation}  where the $G_p$ are IID complex standard normal random variables, indexed by prime numbers $p$	(cf. Section 1.1). We give a matching upper and lower bound for the distribution of its maximum over intervals of varying length.  	Observe that the prefactor of the tail in Theorem \ref{upperlowerbd} interpolates between $1$ and $y$. 
	\begin{manualtheorem}{1}\label{upperlowerbd}
Let $T$ be large, let $y  >0$ and $y = O\left( \frac{\log \log T}{\log \log \log T} \right)$. Let $\theta= (\log \log T)^{-\alpha}$ with $\alpha \in (0,1)$. Then we have 
		\begin{equation*}
			\PP \left( \max_{|h| \leq (\log T)^{\theta} } R_T(h) > \frac{ (\log T)^{\sqrt{1+\theta}}}{ (\log \log T)^{ \frac{1+2\alpha}{4\sqrt{1+\theta}}}} e^y \right) \asymp \left( 1+ \frac{y}{ (\log \log T)^{1-\alpha}} \right) e^{-2\sqrt{1+\theta}y} e^{- \frac{y^2}{\log \log T}}.	\label{upperlowerbd2}
		\end{equation*} 
	\end{manualtheorem}	
Theorem \ref{upperlowerbd} proves the general case for when $y$ grows slowly with $T$. Observe that for $y$ at most $(\log \log T)^{1-\alpha}$, the tail is $\ll 	e^{-2\sqrt{1+\theta}y} e^{-y^2/\log \log T} $.  On the other hand, for $y$ in the large deviation regime, i.e. when $y$ is larger than $(\log \log T)^{1-\alpha}$, the tail is $\ll \left( 1+ \frac{y}{ (\log \log T)^{1-\alpha}} \right) e^{-2\sqrt{1+\theta}y} e^{- \frac{y^2}{\log \log T}}$.  A nontrivial consequence is a new normalization for the moments of zeta over short intervals. 	By Theorem \ref{upperlowerbd}, we prove the following corollary. 
	\begin{manualcorollary}{1}\label{moments corollary}
		For $\beta_c= 2\sqrt{1+\theta}$ and $\alpha \in (1/2, 1)$, we have for $A>0$, 
		\begin{equation}  \label{momentseqn}
		\frac{1}{(\log T)^{\theta}}	\int_{|h| \leq (\log T)^{\theta} } (R_T(h))^{ \beta_c}  \; dh \asymp_A
			\frac{(\log T)^{ \frac{\beta_c^2}{4}}}{(\log \log T)^{\alpha-1/2}},
		\end{equation}
		for all $t \in [T, 2T]$ except possibly on a subset of Lebesgue measure $\ll 1/A$. 	
		\label{momentstheorem}
	\end{manualcorollary} 
	In relation to other work, for $\theta=0$, a consequence of Theorem 1.9 from  \cite{SaksWebb} shows that $$\dfrac{\sqrt{\log \log T}}{ \log T} \int_{|h| \leq 1} | R_T(h)|^2 \; dh $$ converges in distribution as $T \to \infty$. Now, we note that Corollary \ref{moments corollary} also holds for $\alpha \in (0, 1/2]$, though the result is not sharp.  Arguin \& Bailey \cite{AB} prove an upper bound for all $\theta >0$ for the Riemann zeta function:  
	\begin{equation}  \label{IIDmoments}
		\frac{1}{(\log T)^{\theta}}		\int_{|h| \leq (\log T)^{\theta} } | \zeta( 1/2 + it + ih)|^{\beta_c}   \; dh \ll A  (\log T)^{\frac{\beta_c^2}{4}}. 
	\end{equation}
	
	This holds for all $t \in [T, 2T]$ except possibly on a subset of Lebesgue measure $\ll 1/A$. Hence, for $\alpha \in (0,1/2]$, we expect no correction for the random model.  
	
	Now, equation  \eqref{IIDmoments} corresponds to what is known for the IID regime where $\theta > 0$ and no correction is expected \cite{BKL}. But for the log-correlated regime ($\theta=0$),  Harper gives the correction $1/\sqrt{\log \log T}$:
	\begin{manualtheorem}{2} [Theorem 1 and Corollary 1 in \cite{Harper2}] Uniformly for all large $T$, we have 
		$$	\int_{|h| \leq 1} | \zeta( 1/2 + it + ih)|^2 dh \leq A \frac{ \log T}{\sqrt{\log \log T}}$$
		for all $t \in [T, 2T]$ except possibly on a subset of Lebesgue measure $\ll \frac{ (\log A) \wedge \sqrt{\log \log T}}{A}.$
	\end{manualtheorem}
	Corollary \ref{moments corollary} yields the normalization $1/(\log \log T)^{\alpha-1/2}$ for $\alpha \in (1/2, 1)$ and generalizes Harper's result in the setting of the random model \eqref{random model1} . This  correction interpolates between $1/\sqrt{\log \log T}$ and $1$, i.e. between the correction for the log-correlated regime and that of the IID regime.  In contrast, for $\alpha \leq 1/2$, i.e. for intervals that have length greater than $\exp(\sqrt{\log \log T})$, we observe a transition into the IID regime, as shown in Arguin \& Bailey's result \eqref{IIDmoments}. A similar threshold at $\alpha= 1/2$  was noted in the realm of random multiplicative functions \cite{Xu23}.
	
	In addition to proving a bound for the right tail of the maximum, we prove a bound for its left tail: 
	\begin{manualtheorem}{3} \label{leftbound}	\label{leftbound} Let $T$ be large, let $y < 0$ and $y = O\left( \frac{\log \log T}{\log \log \log T} \right)$. Let $\theta= (\log \log T)^{-\alpha}$ with $\alpha \in (0,1)$. Then we have 
		\small
		\begin{equation*} 
			\PP \left( \max_{|h| \leq (\log T)^{\theta} } R_T(h)  > \frac{ (\log T)^{\sqrt{1+\theta}}}{ (\log \log T)^{ \frac{1+2\alpha}{4\sqrt{1+\theta}}}} e^{y} \right)  = 1 - O\left(  \frac{ e^{2\sqrt{1+\theta}y}}{ 1-y } \right), \end{equation*}
		as $t \to \infty$. 
	\end{manualtheorem}
Theorem \ref{upperlowerbd} and Theorem \ref{leftbound} imply tightness of the sequence $$\max_{|h| \leq (\log T)^{\theta}} \log(R_T(h)) - \left( \sqrt{1+\theta}\log \log T - \frac{1+2\alpha}{4\sqrt{1+\theta}} \log \log \log T \right),$$ for large enough $T$.  The order of magnitude of the maximum was used to conjecture the size of  pseudomoments  \cite{Gerspach}. 
	
Finally, Theorem \ref{upperlowerbd} and Corollary \ref{moments corollary} motivate the following conjectures for the Riemann zeta function, respectively. The first part of Conjecture \ref{conjecture} extends the conjecture stated on Pg.~2 of \cite{ADH}. 
	\begin{conjecture} \label{conjecture}
		Let $0<\alpha<1$ and $\theta=(\log \log T)^{-\alpha}$. Then 
		$$ \max_{|h| \leq (\log T)^{\theta}} |\zeta(1/2 + i\tau +ih)| = \frac{ (\log T)^{\sqrt{1+\theta}}}{(\log \log T)^{ \frac{1+2\alpha}{4\sqrt{1+\theta}}}} e^{\mathcal{M}_T},$$
		where $\tau$ is uniformly distributed on $[T, 2T]$, and ($\mathcal{M}_T, T > 1)$ is a tight sequence of random variables converging as $T \to \infty$ to a random variable $\mathcal{M}$ with right tail $$\PP(\mathcal{M} > y) \sim  \left( 1+ \frac{y}{ (\log \log T)^{1-\alpha}} \right) e^{-2\sqrt{1+\theta}y} e^{- \frac{y^2}{\log \log T}}.$$
		
		Moreover, for $\beta_c= 2\sqrt{1+\theta}$ and $\alpha \in (1/2, 1)$, we have for $A>0$, 
		
		$$ \PP \left(\frac{1}{(\log T)^{\theta}} \int_{|h| \leq (\log T)^{\theta}} |\zeta(1/2 + it) |^{\beta_c} dt > A \frac{(\log T)^{ \frac{\beta_c^2}{4}}}{(\log \log T)^{\alpha - 1/2}} \right) \ll \frac{1}{A}. $$
		
	\end{conjecture}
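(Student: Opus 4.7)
The plan is to transfer the sharp bounds already established for the random model (Theorem \ref{upperlowerbd} and Corollary \ref{moments corollary}) to the Riemann zeta function through a Dirichlet polynomial approximation combined with a multiscale barrier analysis. Concretely, for $\tau$ uniform on $[T,2T]$ and $|h|\leq (\log T)^{\theta}$, one first writes
$$\log |\zeta(1/2+i(\tau+h))| = \sum_{p \leq X} \frac{\cos((\tau+h)\log p)}{\sqrt{p}} + E(\tau,h,X),$$
with $X$ a suitable Selberg-type cutoff close to $T$ and $E$ an error that is small away from an exceptional set of small measure. Under averaging in $\tau$, the phases $\cos((\tau+h)\log p)$ are approximately independent across primes and behave like the real parts of $G_p p^{-ih}$, so the left-hand side is modeled by $\log R_T(h)$ up to controlled error.

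With this reduction in place, the upper tail should follow by running the proof of Theorem \ref{upperlowerbd} verbatim: partition primes into scales $[\exp(e^{k-1}),\exp(e^k)]$, introduce the associated random walks $X_k(h)$, and combine a ballot-type argument with a second-moment computation on the event that the walk stays below the appropriate barrier $\sqrt{1+\theta}k - \tfrac{1+2\alpha}{4\sqrt{1+\theta}} \log k$. The matching lower tail and the sharp moment estimate would then come from a truncated second-moment argument restricted to that same barrier event, in the spirit of Arguin--Bourgade--Radziwill for $\theta=0$ and Harper for the moment correction. Corollary \ref{moments corollary} for $\zeta$ would follow by partitioning $[-(\log T)^{\theta},(\log T)^{\theta}]$ into unit sub-intervals, applying the tail estimate on each, and integrating.

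The hard part, and the reason this is a conjecture rather than a theorem, is controlling the error term $E(\tau,h,X)$ and the joint distribution of $\zeta$ at nearby points $\tau+h$ in the intermediate regime $\theta=(\log\log T)^{-\alpha}$. Unconditional tools such as Selberg's central limit theorem and twisted moment asymptotics for bounded shifts suffice to transfer the upper bound up to constants, but matching the sharp lower bound requires understanding twisted $2k$-th moments with $k$ growing slowly with $T$ (or an unconditional variance reduction for the partition function $\int |\zeta|^{\beta_c}$ at two close points), which is presently out of reach. In particular, the factor $1+\tfrac{y}{(\log\log T)^{1-\alpha}}$ in the right tail is built from the small fluctuations of the maximum within the window $(\log T)^{\theta}$, and isolating this factor in $\zeta$ — rather than in the Gaussian proxy — is where all currently available methods stop short.
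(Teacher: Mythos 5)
This statement is a \emph{conjecture} in the paper: no proof is given, and none is claimed. The paper motivates it solely by analogy with its results for the random model $R_T(h)$ (Theorem \ref{upperlowerbd}, Theorem \ref{leftbound}, and Corollary \ref{moments corollary}), extending the conjecture on p.~2 of \cite{ADH}. Your write-up correctly recognizes this status rather than pretending to close the gap: the transfer strategy you sketch (Dirichlet polynomial approximation of $\log|\zeta|$, multiscale decomposition into the increments $Y_m$, ballot-type barrier arguments, and a truncated second moment in the spirit of Arguin--Bourgade--Radziwi\l\l\ for $\theta=0$ and Arguin--Bailey for fixed $\theta>0$) is exactly the route one would expect, and your identification of the obstructions --- controlling the approximation error uniformly over the window, the joint distribution of $\zeta$ at nearby shifts in the intermediate regime $\theta=(\log\log T)^{-\alpha}$, and the need for twisted moment information beyond what is currently available to capture the prefactor $1+y/(\log\log T)^{1-\alpha}$ --- is accurate and consistent with why the authors leave this as a conjecture. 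There is nothing in the paper to compare your argument against, so the only fair assessment is that your proposal is an honest and reasonable research program, not a proof, which is the correct posture for this statement.
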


	\subsection{The Random Model of $\log |\zeta|$ }
To study $\log |\zeta|$ on a short interval $I$ on the critical axis, we consider its random model: 
\begin{equation} \label{logR}
	\log(R_T(h))= {\sum_{p \leq T}}  \Re \dfrac{(G_p p^{-ih})}{p^{1/2}}, \;\;\; h \in I.
	\end{equation} 
To obtain \eqref{logR}, we first take the logarithm of the Euler Product \eqref{eulerproduct} at $s= 1/2 + i(\tau + h)$, where $\tau$ is chosen uniformly from $[T, 2T]$ and $h \in I$. Next, we perform a Taylor expansion and obtain 
	\begin{equation}\label{randomsum}
		\log \zeta(1/2+i(\tau + h))\approx	\log \prod_{p} \left( 1- \frac{1}{p^{1/2+i(\tau+h)}} \right)  \approx \sum_{p \leq T}  \dfrac{p^{-i\tau} p^{-ih}}{p^{1/2}}.
	\end{equation}
We then identify $p^{-i\tau}$ with $G_p$, as defined in \eqref{random model1}.  After taking the real part of the expression in \eqref{randomsum}, we obtain $\log ( R_T(h))$, the random model of $\log |\zeta|$. The process $\log(R_T(h))$ is Gaussian with mean 0 and variance that is approximately $(\log \log T)/2$. 

To evoke the branching structure of the process, we rewrite $\log (R_T(h))$ as the sum of IID increments by using a multiscale decomposition. To that end, write $J=e^{e^j}$, $K= e^{e^k}$ and $T= e^{e^{t}}$. For any $h \in I$,  consider the partial sum 
	\begin{equation} \label{Yk1}
S_j(h) = \sum_{m=1}^{j} Y_m(h), \;\;\;\text{for all} \;\; j\leq t \;\;\; \text{where} \;\;\;\; Y_m(h) = \sum_{e^{m-1} < \log p \leq e^m} \Re \frac{ (G_p p^{-ih})}{p^{1/2}}. 
\end{equation} 
The increments $Y_m$ are Gaussian and have an approximate variance of $1/2$ by Merten's estimate, which states that for any $x \geq 1$ and some constant $C > 0$, we have 
$$ \sum_{p \leq x} \frac{1}{p} = \log \log x + C + O\left(\frac{1}{\log x}\right).$$
Since the increments are not perfectly IID, the random model $S_j(h)$ is an approximate random walk for every $h$. Throughout the proofs, we primarily use the partial sum notation. Observe that $S_j(h)=\log (R_J(h))$. 
	
	We will often consider the process $S_j(h)$ and its covariance at varying subintervals of time as done in \cite{ADH}. Define the process $S_{k,l}(h)$  where $1 \leq k < l \leq t$:
	\begin{equation} \label{inbetweencovariance}
		S_{k, l} (h) =\sum_{m=k}^{l} Y_m(h), \;\;\; |h| \leq (\log T)^{\theta}.
	\end{equation}
Let $h, h^{'} \in [-(\log T)^{\theta}, (\log T)^{\theta}]$. To find the covariance $\EE[S_{k, l}(h), S_{k,l}(h^{'})]$, we perform straightforward computations and use the laws of $G_p$ to obtain 
	\begin{equation} \label{covariance}
		\EE[S_{k, l}(h) \cdot S_{k, l}(h^{'}) ] = \sum_{k \leq m < l}\; \sum_{e^{m-1} < \log p \leq e^m} \frac{\cos (h-h^{'}) \log p}{2p}.
	\end{equation}  
The following corollary shows that the covariance \eqref{covariance} depends on the distance between the two points $h, h^{'}$: 
	\begin{manualcorollary}{2}  Define $S_{k, l}(h)$ as in \eqref{inbetweencovariance}. For $h, h^{'} \in [-(\log T)^{\theta}, (\log T)^{\theta}]$, we have 
		\begin{equation}
			\EE[ S_{k, l}(h), S_{k, l}(h^{'})] = 
			\begin{cases}
				\frac{1}{2} (l-k) + O( e^{2l} |h-h^{'}|^2) & \text{if  } |h-h^{'} |  < e^{-l}   \\
				O ( e^{-k}|h-h^{'}|^{-1}) & \text{if } |h-h^{'}| > e^{-k}.
			\end{cases}
		\end{equation}
	\end{manualcorollary} 
	\begin{proof}
		Use \eqref{covariance} and apply Lemma 2.2 of \cite{ADH}. 
	\end{proof}
	Notice that when $|h-h^{'}| < e^{-l}$, the covariance is nearly equal to the variance; hence, two random variables $S_{k, l}(h)$ and $S_{k,l}(h^{'})$ are roughly the same. But when $|h-h^{'}| > e^{-k}$, the two random variables decorrelate as $h$ and $h^{'}$ grow further apart.

	\subsection{Proof Outline}
\indent To prove the upper bound of the right tail (Theorem \ref{upperlowerbd}), we adopt the method from Theorem 1.3 in \cite{ADH}, but perform a union bound over $\ll e^{k+t\theta}$ discrete points since $S_k(h) \approx S_k(h')$ whenever  $|h-h'| < e^{-k}$. We use a barrier that is similar in spirit to the one in \cite{ADH}, but include an extra term $(1-k/t)t^{1-\alpha}$ due to the larger interval size. This causes the barrier to be high and ineffective in constraining process values for $k \in [0, t-t^{\alpha})$. We thus omit the barrier for this range. The main contribution comes from the latter range of $k$ where applying the ballot theorem contributes significantly to the new tail bound. Details are provided in Section 2. 

The proof for the lower bound of the right tail (Theorem \ref{upperlowerbd}) and for the left tail bound  (Theorem \ref{leftbound}) uses a Paley-Zygmund inequality argument. Both proofs are similar to the argument in Section 4 of \cite{ADH}, but employ a different barrier and slope. We consider negative values of $y$  for Theorem \ref{leftbound}. 

For Corollary \ref{moments corollary}, a heuristic for the proof is provided on pg.~26 in \cite{AB}, but we give a rigorous argument in the spirit of Corollary 1.4 and use a  \textit{good }event that includes the barrier used in Theorem \ref{upperlowerbd} (cf. equation \eqref{goodevent EA}).  This modification induces an application of the tail bound from Theorem \ref{upperlowerbd}, which yields the new correction term. 

For general literature on extreme values of the Riemann zeta function, we refer the reader to sources such as \cite{Arguin}, \cite{ABH} and \cite{BK}. \\\\
	\textbf{Organization of results.} We prove the upper and lower bound of Theorem \ref{upperlowerbd} in Sections \ref{Section 2} and \ref{Section: LB of right tail}, respectively. We prove Theorem \ref{leftbound} in Section \ref{Section: LeftTail of Max}. Lastly, we prove Corollary \ref{moments corollary} in Section \ref{Section: Moments}. \\\\
	\textbf{Notation.} \label{notation} We state the notation commonly used throughout the paper. We write $f(x) = O(g(x))$ or $f(x) \ll g(x)$ if there exists a $C \in \mathbb{R}_{>0}$ and $x_0 \in \mathbb{R}_{> 0}$ such that $|f(x)| \leq C g(x)$ for all $x \geq x_0$. The expression $f(x) \asymp g(x)$ means that both $f(x) \ll g(x)$ and $f(x) \gg g(x)$ hold. We use $T$ to denote a large real number on the critical line. For ease of notation, we shorten the $\log \log$ expression and use $t = \log \log T$, $k = \log \log K$ and $l = \log \log L$; we use both lower and capital case letters interchangeably. Specific notation introduced in a proof will be explained at its given place.  \\\\
	\textbf{Acknowledgements.} 
	I would like to thank my advisor, Louis-Pierre Arguin, for his guidance and for the many helpful comments and suggestions. I would also like to thank the Mathematical Institute of the University of Oxford for hosting my academic visit during which this paper was completed. Partial support is provided by grants NSF CAREER 1653602 and NSF DMS 2153803.

	\section{Upper Bound of the Right Tail} \label{Section 2} 
	The upper bound of the right tail in Theorem \ref{upperlowerbd} is proved in Proposition \ref{ubrighttail}. The left bound of the right tail is proved in the next section. 
	
	\begin{proposition} \label{ubrighttail}
	Let $T$ be large, let $y  >0$ and $y = O\left( \frac{\log \log T}{\log \log \log T} \right)$. Let $\theta= (\log \log T)^{-\alpha}$ with $\alpha \in (0,1)$. Then we have   \begin{equation*} 
			\PP \left( \max_{|h| \leq (\log T)^{\theta} } R_T(h) > \frac{ (\log T)^{\sqrt{1+\theta}}}{ (\log \log T)^{ \frac{1+2\alpha}{4\sqrt{1+\theta}}}} e^y \right) \ll \left( 1+ \frac{y}{ (\log \log T)^{1-\alpha}} \right) e^{-2\sqrt{1+\theta}y} e^{- \frac{y^2}{\log \log T}}.
		\end{equation*} 
	\end{proposition}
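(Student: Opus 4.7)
The plan is to adapt the multi-scale barrier method of Theorem~1.3 in \cite{ADH} to the longer interval $[-(\log T)^\theta, (\log T)^\theta]$, with two key modifications: a finer discretization at each scale, and a barrier with an extra ``slope'' term that yields the new interpolating prefactor $1 + y/(\log\log T)^{1-\alpha}$ in the tail. Writing $t = \log\log T$ and $m_T = \sqrt{1+\theta}\, t - \frac{1+2\alpha}{4\sqrt{1+\theta}}\log t$, the goal is to bound $\PP(\max_{|h|\leq (\log T)^\theta} S_t(h) > m_T + y)$. By the covariance corollary above, $S_k(h) \approx S_k(h')$ whenever $|h-h'| \leq e^{-k}$, so at scale $k$ I discretize the interval into a net $\mathcal{H}_k$ of cardinality $\asymp e^{k + t\theta}$; a chaining / modulus-of-continuity estimate on the Gaussian increments $Y_m$ shows that the continuous supremum is controlled by the discrete one up to a bounded additive error, so from here on I work with the discrete process.

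Next, I introduce a barrier
\begin{equation*}
\gamma(k) \;=\; \sqrt{1+\theta}\, k \;+\; \Bigl(1 - \tfrac{k}{t}\Bigr)\, t^{1-\alpha} \;+\; (\text{log corrections}),
\end{equation*}
and impose $S_k(h) \leq \gamma(k)$ only on the late range $k \in [t-t^\alpha,\, t]$. The extra term $(1-k/t)\, t^{1-\alpha}$ pushes the barrier above the natural scale $\asymp \sqrt{k(k + t^{1-\alpha})}$ of $\max_{h\in\mathcal{H}_k} S_k(h)$ for $k < t - t^\alpha$ (by AM--GM), so the barrier is non-binding in the early range and may be dropped there. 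Writing $\mathcal{B}$ for the event ``$S_k(h) \leq \gamma(k)$ for all $h \in \mathcal{H}_k$ and all $k \in [t-t^\alpha, t]$'', I use the decomposition
\begin{equation*}
\PP\bigl(\max_{h \in \mathcal{H}_t} S_t(h) > m_T + y\bigr) \;\leq\; \sum_{h \in \mathcal{H}_t}\PP\bigl(S_t(h) > m_T + y;\, \mathcal{B}\bigr) \;+\; \PP(\mathcal{B}^c),
\end{equation*}
and bound $\PP(\mathcal{B}^c)$ by a standard union bound over scales and net points; Gaussian tails then yield a contribution strictly smaller than the main term, thanks to the slack $(1-k/t)\, t^{1-\alpha}$ built into $\gamma$.

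For each fixed $h$ in the main term, conditioning on $S_t(h)$ lying in an infinitesimal window near $m_T+y$ and applying the ballot theorem to the Gaussian walk $\{S_k(h)\}_{k\in[t-t^\alpha, t]}$ of length $t^\alpha$ should yield
\begin{equation*}
\PP\bigl(S_t(h) > m_T+y;\, \mathcal{B}\bigr) \;\asymp\; \frac{1 + y/t^{1-\alpha}}{t^\alpha}\cdot \frac{1}{\sqrt{t}}\, e^{-(m_T+y)^2/t},
\end{equation*}
since the ballot ``gap'' at the endpoint of the walk is $\asymp 1$ while its effective gap at the start is $\asymp \max(1,\, y/t^{1-\alpha})$ after subtracting the drift. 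Expanding $(m_T + y)^2/t = (1+\theta)\, t + 2\sqrt{1+\theta}\, y + y^2/t - \tfrac{1+2\alpha}{2}\log t + o(1)$ and summing over the $|\mathcal{H}_t| \asymp e^{t(1+\theta)}$ net points collapses to the target bound. The principal obstacle is proving the ballot estimate uniformly in $y$ across the full range $0 < y = O(t/\log t)$: the prefactor $1 + y/t^{1-\alpha}$ must emerge from a careful interpolation between the typical regime $y \ll t^{1-\alpha}$, where the walker is far from the barrier and ballot contributes a constant $O(1/t^\alpha)$, and the large-deviation regime $y \gg t^{1-\alpha}$, where the walker is forced close to the barrier and the prefactor grows linearly in $y$. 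Secondary technical points are the verification that the barrier is non-binding for $k < t - t^\alpha$ (handled by a separate union bound combined with Gaussian tail estimates) and the integration against the Gaussian entry distribution of $S_{t-t^\alpha}(h)$ when applying the ballot formula.
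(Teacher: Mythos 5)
Your main-term computation is essentially the paper's: the same discretization into $\asymp e^{k+t\theta}$ points per scale, the same barrier shape with the extra $(1-k/t)t^{1-\alpha}$ term, the barrier dropped on $k<t-t^\alpha$, and the ballot theorem on the late scales producing the entrance-gap factor $\asymp (1+y/t^{1-\alpha})/t^{\alpha}$, which after summing over $e^{t(1+\theta)}$ points gives the stated tail. The gap is in your top-level decomposition $\PP(\max_h S_t(h)>m_T+y)\leq \sum_h\PP(S_t(h)>m_T+y;\,\mathcal{B})+\PP(\mathcal{B}^c)$, specifically in the term $\PP(\mathcal{B}^c)$. First, as written your barrier $\gamma$ contains no $y$, so $\PP(\mathcal{B}^c)$ does not decay in $y$ and the bound fails as soon as $y\to\infty$; worse, once $y$ exceeds the logarithmic corrections the event $\{S_t(h)>m_T+y\}\cap\mathcal{B}$ is empty and all the mass sits in $\PP(\mathcal{B}^c)$. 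Second, even after inserting $+y$ into the barrier, $\mathcal{B}^c$ contains $\{\exists h:\ S_t(h)>\gamma(t)\}=\{\max_h S_t(h)>m_T+y+O(1)\}$, i.e.\ the very event being bounded, so estimating $\PP(\mathcal{B}^c)$ is exactly as hard as the original problem. A ``standard union bound over scales and net points with Gaussian tails'' cannot do it: for $k\in[t-t^\alpha,t]$ the slack $(1-k/t)t^{1-\alpha}$ is only $O(1)$, and the union bound at scale $k$ gives $e^{k+t\theta}e^{-M(k)^2/k}/\sqrt{k}\asymp t^{\alpha}e^{-2\sqrt{1+\theta}y}e^{-y^2/t}$, which overshoots the target by the factor $t^{\alpha}$ that the ballot theorem exists to recover.

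The paper sidesteps this by decomposing over the \emph{first hitting time} of the barrier: the event is a disjoint union over the first scale $k+1$ at which $\max_h S_{k+1}(h)>M(k+1)$. For $k+1\leq t-t^\alpha$ the barrier at the crossing scale is high (it includes $y+(1-k/t)t^{1-\alpha}$), and a union bound plus the single-Gaussian estimate of Lemma 3.6 of \cite{ADH} gives a total contribution $\ll e^{-2\sqrt{1+\theta}y}e^{-y^2/t}$ --- of the \emph{same order} as the main term, not negligible, so your claim that the early range can simply be discarded as ``non-binding'' also needs this quantitative check. For $k+1>t-t^\alpha$ the crossing event automatically carries the below-barrier constraint at all earlier times, which is what allows the ballot theorem to supply the factor $t^{-\alpha}(1+y/t^{1-\alpha})$. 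You should replace the $\mathcal{B}$ versus $\mathcal{B}^c$ split by this hitting-time decomposition (bounding $\PP(\mathcal{B}^c)$ rigorously would force you into the same decomposition anyway).
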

	
	\begin{proof}	
		Let $\mu$ denote the slope 
		 \begin{equation}\label{slope} 
			\mu= \sqrt{1+\theta} - \dfrac{1+2\alpha}{4\sqrt{1+\theta}} \frac{\log t}{t}.
		\end{equation} 
Define 
		\begin{equation}\label{why 11/4} 
		b(k)=(1-k/t)t^{1-\alpha} +  \frac{11}{4 \alpha}\psi(k) +1, \;\; \text{for any }\;0\leq  k \leq t,
	\end{equation} 
where the logarithmic ''bump function" $\psi(k)$ is
\begin{equation}
	\psi(k)=  \begin{cases}
		\log (\min(k, t-k)), & \text{if  }1 \leq k \leq t-1 \\
		0 & \text{if  } k \in \{0, t\}.
	\end{cases} 
\end{equation} 
We define the barrier $M(k)$ to be 
		\begin{equation} \label{ub: barrier} 
			M(k) = \mu k + y + b(k), \;\; \text{for any }\;0\leq k \leq t.
		\end{equation} 
In the definition of $b(k)$, we multiply $\psi(k)$ by  $\frac{11}{4 \alpha}$  to ensure convergence of sums (cf. equations \eqref{firstsum3}, \eqref{why11/4 2}). 
		
		The key idea of the proof is to consider the \textit{first hitting time} of the process. We decompose the main event over when the maximum of the process first crosses the barrier, say at $k+1$: 
		\begin{align} 
			&\PP \left( \max_{|h| \leq (\log T)^{\theta}}S_t (h) > M(t)  \right) \notag\\
			&= \sum_{k=0}^{t-1} \PP ( \max_{|h| \leq (\log T)^{\theta}}S_t(h) > M(t),\;\; \max_{|h| \leq (\log T)^{\theta}} S_j(h) < M(j); \forall j \leq k , \notag \;\\ &\hspace{8cm}\max_{|h| \leq (\log T)^{\theta}} S_{k+1} (h) > M(k+1)). \label{hittingtimesum}
		\end{align}
		We drop the first event in \eqref{hittingtimesum} and estimate the following sum over two ranges of $k$: 
		\begin{align}
			\label{upperboundmainsum} 
			&\leq \sum_{k=0}^{t-1} \PP \left(  \max_{|h| \leq (\log T)^{\theta}} S_j(h) < M(j), \forall j \leq k, \;\max_{|h| \leq (\log T)^{\theta}} S_{k+1}(h) > M(k+1) \right). 
		\end{align} 
		We base the two cases on when the barrier $M(k)$ does not contribute to sharp estimates. Indeed, observe that at $k=0$, the barrier $M(k)$ is high at $t^{1-\alpha} +y$ and continues to far exceed the process values for $k \leq t-t^{\alpha}$. Hence, the barrier $M(j)$ for all $j \leq k$ is included only for $k > t-t^{\alpha}$. The dominant contribution comes from the latter range of $k$.   \\\\
	$\bullet \textbf{ Case 1}:  0 < k \leq t-t^{\alpha}$ \\
First, we omit the barrier for all $j \leq k$ from the event in \eqref{upperboundmainsum}. Next, we divide the interval $[ -(\log T)^{\theta}, (\log T)^{\theta}]$ into subintervals of length $e^{-(k+1) }$ and perform a union bound:
		\begin{align}
			&\sum_{k=0}^{t-t^{\alpha}} \PP \left( \max_{|h| \leq (\log T)^{\theta}} S_{k+1}(h) > M(k+1)  \right) \ll \sum_{k=0}^{t-t^{\alpha}} e^{k+1+t\theta}\;\PP \left( \max_{|h| \leq e^{-(k+1)}} S_{k+1}(h) > M(k+1)\right). \label{firstsum1}
		\end{align}
		To handle the probability on the right hand side of  \eqref{firstsum1}, we use the following lemma:
		
		\begin{lemma}[Lemma 3.6 of \cite{ADH}]
			\label{lemmasingleGaussian}
			Let $C>0$ and $1 \leq j \leq t$. For any $1 < y \leq C_j$, we have 
			$$\PP\left( \max_{|h|\leq e^{-j}} S_j(h) > y\right) \ll_C \frac{e^{-y^2/j}}{j^{1/2}},$$ 
			where $\ll_C$ means that the implicit constant depends on $C$. 
		\end{lemma}
		
		The essential observation of Lemma \ref{lemmasingleGaussian} is that the maximum of $S_{k+1}$ over a subinterval of size $e^{-(k+1)}$ behaves like a single Gaussian random variable with mean 0 and variance $k+1$. Hence, by Lemma \ref{lemmasingleGaussian}, the inequality \eqref{firstsum1} is bounded by
		\begin{align} \label{singlegaussian}	
			\ll \sum_{k=0}^{t-t^{\alpha}} e^{k+1+t\theta} \dfrac{1}{\sqrt{k+1}} \exp \left( - \dfrac{ (M(k+1))^2}{k+1} \right).
		\end{align}
The inequality \eqref{singlegaussian} simplifies to 
		\begin{align} 
			&\ll \sum_{k=0}^{t-t^{\alpha}} \dfrac{\exp \left( k+t\theta -\mu^2(k+1) - 2 \mu b(k+1) - 2\mu y - y^2/(k+1) \right)}{\sqrt{k+1}} && \notag \\
			&\ll \sum_{k=0}^{t-t^{\alpha}} \dfrac{ \exp ( (k-t)\theta  + \dfrac{1+2\alpha}{2\sqrt{1+\theta}} \log t \cdot \dfrac{k+1}{ t} - \frac{11}{2\alpha} \psi(k+1) - 2\sqrt{1+\theta}y - y^2/t  )}{ \sqrt{k+1}}.&& \label{firstsum2} 
		\end{align}
		We have the estimate 
		\begin{equation} \label{exp(-2uy) estimate} 
			\exp(-2\mu y) \ll e^{-2 \sqrt{1+\theta y}},
			\end{equation} since $y = O(\frac{t}{\log t}).$ Also, since $(\log x) / x$ is a decreasing function for $x > e$, we deduce that 
		\begin{equation}
			\label{eqn: decreasing log} 
			\dfrac{1+2\alpha}{2\sqrt{1+\theta}} \log t \cdot \dfrac{k+1}{ t} \leq  \dfrac{1+2\alpha}{2\sqrt{1+\theta}} \log (k+1).
		\end{equation}
		Then by \eqref{eqn: decreasing log}, and since $ e^{(k-t)\theta} \leq e^{-c}$  when $k < t-t^{\alpha}$ for some constant $c$, equation \eqref{firstsum2}  is
		\begin{align}
			&\ll e^{-2\sqrt{1+\theta} y} e^{-y^2/t}\; \sum_{k=0}^{t-t^{\alpha}} \frac{\exp \left(  \dfrac{1+2\alpha}{2\sqrt{1+\theta}} \log (k+1) - \frac{11}{2\alpha} \psi(k+1)  \right)}{\sqrt{k+1}}.  \label{firstsum3}
		\end{align} 
		Finally, since the log bump function $ \psi(k)$ is symmetric around $t/2$, we have: 
		\begin{equation}\label{logbump}
			\log( \min(k, t-k))=
			\begin{cases}
				\log k & \mbox{ if $0 < k < t/2$} \\
				\log (t-k)  &  \mbox{ if $ t/2 < k \leq t-t^{\alpha}$ }.
			\end{cases} 
		\end{equation} 
		After considering the two ranges in \eqref{logbump}, the sum \eqref{firstsum3} simplifies to $\ll e^{-2\sqrt{1+\theta}y} e^{-y^2/t}$. This concludes the upper bound estimate for $0 \leq k \leq t-t^{\alpha}$.
	\\\\
	$\bullet \textbf{ Case 2: }  t-t^{\alpha} < k \leq t$ \\\\
		We consider the sum \eqref{upperboundmainsum} for $t-t^{\alpha} < k \leq t$. By a union bound and translation invariance of the distribution, we have 	
		\begin{align} 
			&\sum_{k=t-t^{\alpha}}^{t} \PP \left(  \max_{|h|\leq (\log T)^{\theta}} S_j(h) < M(j); \forall j \leq k, \; \max_{|h|\leq (\log T)^{\theta}}  S_{k+1}(h) > M(k+1) \right) \notag \\ &\leq 	\sum_{k=t-t^{\alpha}}^{t} e^{k+1+t\theta}\; \PP \left( S_j(0) < M(j); \forall j \leq k ,\;\; \max_{|h|\leq e^{-{k}}} S_{k+1}(h) > M(k+1)  \right). \label{secondsum-mainsum}
		\end{align} 
	To find the probability in \eqref{secondsum-mainsum}, we first decompose the event over all possible values $u$ of the process at time $k$. 	For ease of notation, we define 
		\begin{equation}\label{set}
			\mathcal{B}_{k+1}(u) = \{ S_j(0) < M(j); \forall j \leq k,\; S_{k+1} (0) \in [u, u+1] \}.
		\end{equation} Using \eqref{set}, the probability in \eqref{secondsum-mainsum} is
		\begin{equation}
		\sum_{u \leq M(k)} \PP \left(	\mathcal{B}_{k+1}(u),  \;\; \max_{|h|\leq e^{-{k}}} S_{k+1}(h) > M(k+1)  \right). \label{twocases} 
		\end{equation}
	
	We will need two estimates for \eqref{twocases} since there are two cases for how the maximum of $S_{k+1}(h)$ over $|h| \leq e^{-k}$ can exceed  $M(k+1)$: either the difference $S_{k+1}(h) - S_{k+1}(0)$ is large for some $|h| \leq e^{-k}$ or  the increment $S_{k+1}(0) - S_k(0)$ is large.  Both cases yield the same estimate. We mainly focus on proving the second case. 
		
	For the first case, assume that the difference $S_{k+1}(h) - S_{k+1}(0)$ is large for some $|h| \leq e^{-k}$. Then, we have the estimate
\begin{align}
&	\sum_{k=t-t^{\alpha}}^{t} e^{k+1+t\theta}\;  \sum_{u \leq M(k)} \PP \left(	\mathcal{B}_{k+1}(u),  \;\; \max_{|h|\leq e^{-{k}}} S_{k+1}(h) - S_{k+1}(0) > M(k+1) - u  \right) \notag \\
		&\ll \left(1+\frac{y}{t^{1-\alpha}}\right)e^{-2\sqrt{1+\theta}y}e^{- y^2/t}. \label{firstcasesecondsum}
\end{align}
For the proof of the first case, we defer to Lemma 3.2 of \cite{ADH} where the same argument is used, but with modifications to the barrier and interval size. We use the barrier $M(k)$ as defined in \eqref{ub: barrier}, which changes the estimate in equation 3.34 of \cite{ADH}. 

For the second case, we assume that the increment $S_{k+1}(0) - S_k(0)$ is large. Then \eqref{twocases} is 
		\begin{align} 
			&\sum_{u \leq M(k)} \PP \left( \mathcal{B}_{k+1}(u), \;S_{k+1}(0)-S_k(0) > M(k+1) - u - 1  \right). \label{eqnindependence}
		\end{align} 
By independence of Gaussian increments, the sum \eqref{eqnindependence} is
		\begin{align} 
			&\sum_{u \leq M(k)} \PP (  \mathcal{B}_{k+1}(u) ) \cdot \PP\left( S_{k+1}(0)-S_k(0) > M(k+1) - u - 1  \right). \label{secondsumtwoprob} 
		\end{align} 
 It remains to find the two probabilities in \eqref{secondsumtwoprob}. For the first probability in \eqref{secondsumtwoprob}, an application of the Ballot Theorem \ref{thm: ballot linear UB} gives 
		\begin{align} \PP \left( S_j(h) < M(j); \forall j \leq k, \;S_k \in [u, u+1]  \right) \ll \dfrac{e^{-u^2/k}}{k^{3/2}} (t^{1-\alpha} + y) (M(k) -u-1). \label{secondsumfirstprob}
		\end{align} 
		For the second probability in \eqref{secondsumtwoprob}, a Gaussian estimate yields
		\begin{align} 
			\PP\left( S_{k+1}(0)-S_k(0) > M(k+1) - u - 1  \right) \ll e^{- (M(k+1)-u-1)^2}. \label{secondsumsecondprob}
		\end{align} 
		After substituting the two probability bounds \eqref{secondsumfirstprob} and \eqref{secondsumsecondprob} into the sum \eqref{secondsumtwoprob}, we have 
		\begin{align}
			&\ll  \sum_{u \leq M(k)}  (M(k) -u-1) \cdot (t^{1-\alpha}+y)k^{-3/2}  e^{- (M(k+1)-u-1)^2-u^2/k} \notag \\
			&\leq \sum_{v \geq 0} (t^{1-\alpha}+y) (v-1) \cdot k^{-3/2} e^{-\frac{(M(k)-v)^2}{k} - (v+O(1))^2}. \label{secondsumprobestimate}
		\end{align} 
		A change of variable is performed in line \eqref{secondsumprobestimate} with $v=M(k) - u$ denoting the distance from the process $S_k(h)$ to the barrier $M(k)$. (Recall that $u$ represents all possible values of $S_k(h)$.) This concludes the estimate for the probability in \eqref{secondsum-mainsum}.

		All that remains is to estimate the sum \eqref{secondsum-mainsum}. Substituting the probability bound \eqref{secondsumprobestimate} into  \eqref{secondsum-mainsum}, we obtain 
		\begin{align}
			&\leq \sum_{k=t-t^{\alpha}}^{t} e^{k+1+t\theta} \left( \sum_{v \geq 0}  \dfrac{e^{-\mu^2 k - 2b(k) - 2\mu y- \frac{y^2}{k}+2\mu v}}{k^{3/2}} (v-1)  (t^{1-\alpha}+y)  e^{-( v+O(1))^2} \right) \notag \\
			&\ll (t^{1-\alpha}+y)e^{-2\sqrt{1+\theta}y} e^{- y^2/t } \sum_{k=t-t^{\alpha}}^{t} \frac{e^{k+1+t\theta}  e^{-\mu^2 k-2b(k)}}{ k^{3/2}} \sum_{v \geq 0} (v-1) e^{2v- (v+O(1))^2}. \label{needssimplifying}
		\end{align}
		Observe that the sum over $v$ is finite. After simplifying \eqref{needssimplifying}, we obtain
		\begin{align}
			&\ll(t^{1-\alpha}+y)e^{-2\sqrt{1+\theta}y} e^{- y^2/t } \sum_{k=t-t^{\alpha}}^{t} \frac{ e^{-(t-k)\theta + \frac{ 1+2\alpha}{2} \log k - \frac{11}{2\alpha}\psi(k) } }{k^{3/2} }\notag \\
			&\ll t^{1-\alpha} \left(1+\frac{y}{t^{1-\alpha}}\right)e^{-2\sqrt{1+\theta}y} e^{- y^2/t } \sum_{k=t-t^{\alpha}}^{t}  k^{\alpha - 1} e^{-(t-k)\theta  -  \frac{11}{2\alpha} \log (t-k)} \notag \\
			&\ll  \left(1+\frac{y}{t^{1-\alpha}}\right)e^{-2\sqrt{1+\theta}y} e^{- y^2/t }  \sum_{k=t-t^{\alpha}}^{t}  \left(t/k\right)^{1-\alpha} (t-k)^{- \frac{11}{2\alpha}}  \label{why11/4 2}  \\
			&\ll \left(1+\frac{y}{t^{1-\alpha}}\right)e^{-2\sqrt{1+\theta}y} e^{- y^2/t }. \label{secondcaseestimate}
		\end{align}
	The estimates for the first \eqref{firstcasesecondsum} and second case  \eqref{secondcaseestimate} are the same.  This concludes the upper bound estimate for $t-t^{\alpha} < k \leq t$. 
	\end{proof}

	\section{Lower Bound of the Right Tail} \label{Section: LB of right tail}
	In this section, we prove a lower bound to Theorem \ref{upperlowerbd}. 
	\begin{proposition} \label{lbprop} 
		Let $T$ be large, let $y > 0 $ and $y = O\left( \frac{\log \log T}{\log \log \log T} \right)$. Let $\theta= (\log \log T)^{-\alpha}$ with $\alpha \in (0,1)$. Then we have 
		\begin{equation*} 
			\PP \left( \max_{|h| \leq (\log T)^{\theta} } e^{X_T(h)} > \frac{ (\log T)^{\sqrt{1+\theta}}}{ (\log \log T)^{ \frac{1+2\alpha}{4\sqrt{1+\theta}}}} e^y \right) \gg \left( 1+ \frac{y}{ (\log \log T)^{1-\alpha}} \right) e^{-2\sqrt{1+\theta}y} e^{- \frac{y^2}{\log \log T}}.
		\end{equation*} 
		
	\end{proposition}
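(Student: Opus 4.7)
The plan is to use a Paley--Zygmund second moment argument on a carefully chosen set of good events, mirroring the strategy of Section~4 of \cite{ADH} but with a barrier matched to the hybrid regime. Discretize the interval $[-(\log T)^{\theta},(\log T)^{\theta}]$ into roughly $N_{\mathrm{pts}} \asymp e^{t+t\theta}$ points at spacing $e^{-t}$, since by Corollary~2.1 the process $S_t(h)$ is essentially constant on scales below $e^{-t}$. For each such $h$, define the good event
\begin{equation*}
A_h = \{\,S_j(h) < M(j)\ \text{for all}\ 1 \leq j \leq t,\ \ S_t(h) \in [\mu t + y, \mu t + y + 1]\,\},
\end{equation*}
where $M(j) = \mu j + y + b(j)$ is the same barrier used in Proposition~\ref{ubrighttail}, with slope $\mu$ from \eqref{slope} and the boundary term $b(j)$ containing the extra linear piece $(1-j/t)t^{1-\alpha}$. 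Let $N = \sum_h \mathbf{1}_{A_h}$; then $\{N \geq 1\}$ is contained in the event whose probability we wish to bound below, so it suffices to show $\PP(N \geq 1) \gg (1 + y/t^{1-\alpha})\,e^{-2\sqrt{1+\theta}y}\,e^{-y^2/t}$ via $\PP(N \geq 1) \geq (\EE N)^2/\EE[N^2]$.

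For the first moment, apply the lower half of the ballot theorem to each $S_j(h)$: the initial gap between the walk at $0$ and the barrier is $M(0) \asymp t^{1-\alpha} + y$, the terminal gap is $O(1)$, and the Gaussian penalty at the endpoint is $e^{-M(t)^{2}/t}$. A direct computation, using $\mu^2 t = (1+\theta)t - \tfrac{1+2\alpha}{2}\log t + O(\log^2 t / t)$, gives
\begin{equation*}
\PP(A_h) \;\asymp\; \frac{(t^{1-\alpha}+y)}{t^{3/2}}\,t^{(1+2\alpha)/2}\,e^{-(1+\theta)t}\,e^{-2\sqrt{1+\theta}y}\,e^{-y^2/t},
\end{equation*}
and summing over the $e^{t+t\theta}$ points produces exactly the factor $e^{(1+\theta)t}\cdot t^{\alpha-1}(t^{1-\alpha}+y) = 1 + y/t^{1-\alpha}$, so $\EE N \asymp (1+y/t^{1-\alpha})\,e^{-2\sqrt{1+\theta}y}\,e^{-y^2/t}$. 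This matches the target; hence after Paley--Zygmund, it will be enough to prove $\EE[N^2] \ll \EE N$.

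For the second moment, decompose pairs $(h,h')$ according to the decorrelation scale $\kappa = \lfloor \log(1/|h-h'|)\rfloor$, using Corollary~2.1: for $j \leq \kappa$ the two trajectories are essentially identical, while for $j > \kappa$ their increments become independent. Writing $S_j(h) = S_\kappa + \widetilde{S}_{j-\kappa}$ with $\widetilde{S}$ independent for $h$ and $h'$, the ballot theorem gives
\begin{equation*}
\PP(A_h \cap A_{h'}) \;\ll\; \frac{(t^{1-\alpha}+y)}{\kappa^{3/2}}\,e^{-\mu^2 \kappa-2\mu y}\,e^{-y^2/\kappa}\cdot \Bigl(\frac{1}{(t-\kappa)^{3/2}}\,e^{-\mu^2(t-\kappa)}\Bigr)^{2},
\end{equation*}
and one sums over $\kappa \in [0,t]$ and over pairs at scale $\kappa$ (of which there are $\asymp N_{\mathrm{pts}}\cdot e^{t-\kappa+t\theta}$). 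The barrier penalty $b(k)$ controls the contribution of pairs that decorrelate in the intermediate regime $k \in (t-t^{\alpha}, t]$, while for $k \leq t - t^{\alpha}$ the linear slack $t^{1-\alpha}(1-k/t)$ in $b(k)$ absorbs the exponential cost of two branches of length $t-\kappa$. After cancellation, the total is $\ll \EE N$, closing the Paley--Zygmund estimate.

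The main obstacle will be the second-moment bookkeeping: the new barrier has a non-trivial linear component $(1-k/t)t^{1-\alpha}$ that changes the entropy contribution relative to \cite{ADH}, and the hybrid prefactor $(1+y/t^{1-\alpha})$ must appear \emph{without} being squared in the second-moment estimate. The correct balance comes from the fact that the initial ballot-theorem gap $M(0) \asymp t^{1-\alpha}+y$ enters linearly in $\PP(A_h)$ but only once in $\PP(A_h\cap A_{h'})$ (from the shared portion of the two paths); verifying this cancellation in the intermediate regime $t-t^{\alpha} < \kappa \leq t$, where both the log-correlated and IID effects compete, is where the careful choice of the exponent $11/(4\alpha)$ in the bump $\psi$ becomes important, just as in Case~2 of Proposition~\ref{ubrighttail}.
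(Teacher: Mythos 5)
Your proposal takes essentially the same route as the paper: a Paley--Zygmund argument on the number of points of an $e^{-t}$-net at which the walk stays below a barrier containing the linear term $(1-k/t)t^{1-\alpha}$ and lands in a window around $\mu t+y$, with the first moment computed via the ballot theorem exactly as in Lemma \ref{lemma: lowerboundlemma1} and the second moment decomposed over the branching scale as in Lemma \ref{lemma: lowerboundlemma2}. Your first-moment computation is correct and matches the paper's.

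Two concrete points where your plan deviates from what actually works. First, the paper's lower-bound barrier is purely linear, $M(k)=\mu k+(1-k/t)t^{1-\alpha}+y+1$, with \emph{no} bump $\tfrac{11}{4\alpha}\psi$; this is what allows the linear-barrier ballot theorems (Propositions \ref{thm: ballot linear LB} and \ref{thm: ballot linear UB}) to be applied directly to the shared and post-branching segments. Importing the upper-bound barrier wholesale, and attributing the delicate cancellation to the exponent $11/(4\alpha)$ as your last paragraph does, is off target for this proposition (the bump only matters for the union bound in Section \ref{Section 2}). Second, your second-moment sketch treats increments beyond the branching scale as exactly independent; the covariance is only $O(e^{-k}|h-h'|^{-1})$, so one needs the Gaussian comparison lemma (Lemma \ref{lemma: lowerboundlemma3}, from \cite{ABR2}) to decouple, and the ranges $|h-h'|>1$ (where there is no branching scale in $[0,t]$) must be handled separately --- the paper does this in its Cases 1 and 2, where the squared prefactor is harmless because $\EE[Z_\delta]\ll 1$, so there is no need for the prefactor to appear unsquared there. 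Relatedly, the number of pairs at branching scale $\kappa$ is $\asymp e^{t+t\theta}\cdot e^{t-\kappa}$, not $e^{t+t\theta}\cdot e^{t-\kappa+t\theta}$, and the displayed two-point bound misplaces the costs $e^{-2\mu y}$ and $e^{-y^2/\cdot}$ (each branch pays $e^{-(\mu(t-\kappa)+y-\bar v)^2/(t-\kappa)}$, and one must optimize over the shared value $\bar v$ at the branching time by Laplace's method, as in \eqref{eqn: laplace}). With these corrections your outline closes as in the paper.
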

Before we prove Proposition \ref{lbprop}, we introduce a few definitions. Consider the following set of discrete points  
	\begin{equation} \label{H}
		\mathcal{H} = [-e^{t\theta}, e^{t\theta}] \cap e^{-t}\mathbb{Z},
	\end{equation} 
	separated by a distance of $e^{-t}$. 
	To prove Proposition \ref{lbprop}, we use the Paley-Zygmund inequality to compare the first and second moment of the \textit{number of exceedances }
	\begin{equation} 
		Z_{\delta} = \sum_{h \in \mathcal{H}} 1_{J(h)} \label{exceedances}
	\end{equation} 
	of the event 
	\begin{align*} J(h) = \{     S_t(h) \in [ \mu t + y, \mu t + y + \delta], \;S_k(h) < M(k); \forall \; k \in [1,t] \cap \mathbb{Z} \}\end{align*}  
	for some fixed $\delta > 0$. Recall that the slope $\mu= \sqrt{1+\theta} - \frac{1+2\alpha}{4\sqrt{1+\theta}} \frac{\log t}{t}$. Define the barrier  $M(k)$ to be
	\begin{align} M(k) = \mu k  + \left(1-\frac{k}{t} \right)t^{1-\alpha} +y +1,\;\; \text{for any} \;\; 0 \leq k \leq t.
	 \end{align}

	We establish bounds for the first and second moment of $Z_{\delta} $ in the following two lemmas: 
	\begin{lemma} \label{lemma: lowerboundlemma1} Let $Z_{\delta}$ be as defined in $(\ref{exceedances})$ with $\delta > 0$. Then we have 
		\[ \EE[Z_\delta] \gg \left(1 + \frac{y}{t^{1-\alpha}}\right) e^{-2\sqrt{1+\theta} y} e^{-y^2/t},\]
		where $y(t) \to \infty.$
	\end{lemma}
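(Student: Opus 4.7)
The plan is to compute $\EE[Z_\delta]$ by first applying linearity of expectation and translation invariance, then reducing the resulting single-point probability to a Gaussian endpoint estimate combined with a ballot-theorem lower bound. Since the real parts of $G_p p^{-ih}$ are Gaussian with identical marginal distribution for every $h$, the process $(S_k(h))_{k}$ is equal in law to $(S_k(0))_k$ for each fixed $h$, so $\PP(J(h))$ does not depend on $h$. Thus $\EE[Z_\delta] = |\mathcal{H}|\cdot \PP(J(0))$, and by construction $|\mathcal{H}| \asymp e^{t(1+\theta)}$.

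To estimate $\PP(J(0))$, I would condition on the endpoint $S_t(0)$, splitting the probability into (i) the Gaussian density at the terminal value near $\mu t + y$ and (ii) the probability that the corresponding bridge remains below $M(k)$ for every integer $k\in[1,t]$. Since $S_t(0)$ is Gaussian with variance $\asymp t$, the endpoint density is $\asymp t^{-1/2}\exp(-(\mu t + y)^2/t)$. Using the definition \eqref{slope} of $\mu$ together with $y = O(t/\log t)$, one obtains
\begin{equation*}
e^{-(\mu t + y)^2/t} \;\asymp\; t^{(1+2\alpha)/2}\, e^{-(1+\theta)t}\, e^{-2\sqrt{1+\theta}\,y}\, e^{-y^2/t},
\end{equation*}
so that the endpoint contributes a factor $\asymp t^{\alpha}\, e^{-(1+\theta)t}\, e^{-2\sqrt{1+\theta}\,y}\, e^{-y^2/t}$.

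After subtracting the conditional mean $\mu k + (k/t)y$, the bridge $\tilde S_k$ must satisfy
\begin{equation*}
\tilde S_k \;<\; \bigl(1-\tfrac{k}{t}\bigr)(t^{1-\alpha}+y) + 1 \qquad \text{for all integer } k\in[1,t],
\end{equation*}
i.e.\ it has to stay below a linear barrier descending from $t^{1-\alpha}+y+1$ down to $1$. A discrete ballot-theorem lower bound, analogous to the Brownian bridge formula $1-e^{-2ab/t}\asymp ab/t$ with initial gap $a \asymp t^{1-\alpha}+y$ and terminal gap $b\asymp 1$, then produces a factor $\gg (t^{1-\alpha}+y)/t$. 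Multiplying the three factors $|\mathcal{H}|$, the endpoint density, and the bridge probability yields
\begin{equation*}
\EE[Z_\delta] \;\gg\; e^{t(1+\theta)} \cdot t^{\alpha}\, e^{-(1+\theta)t}\, e^{-2\sqrt{1+\theta}\,y}\, e^{-y^2/t}\cdot \frac{t^{1-\alpha}+y}{t} \;=\; \Bigl(1+\tfrac{y}{t^{1-\alpha}}\Bigr)\, e^{-2\sqrt{1+\theta}\,y}\, e^{-y^2/t},
\end{equation*}
which is exactly the stated bound.

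The main obstacle is securing the discrete ballot lower bound in the exact form required: the increments $Y_m$ are only approximately i.i.d.\ Gaussian (variance $\tfrac12+O(e^{-m})$ by Merten's estimate), and the barrier descends with slope $t^{-\alpha}$, which is small but non-negligible on intermediate scales $k \sim t/2$. I expect this to be handled by adapting the ballot-theorem machinery of \cite{ADH} (used for the companion upper bound in Proposition \ref{ubrighttail}), applying it to the tilted walk with the enlarged initial gap $t^{1-\alpha}+y$, and absorbing the non-IID correction through a standard Gaussian coupling so that the Brownian bridge exit-probability heuristic gives a matching lower bound up to a constant.
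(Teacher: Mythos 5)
Your proposal is correct and follows essentially the same route as the paper: linearity plus translation invariance reduces everything to $|\mathcal{H}|\cdot\PP(J(0))\asymp e^{t(1+\theta)}\PP(J(0))$, and $\PP(J(0))$ is then bounded below by the linear-barrier Gaussian ballot theorem (Proposition \ref{thm: ballot linear LB}, imported from \cite{ADH}), which already packages together the endpoint density $\asymp t^{-1/2}e^{-(\mu t+y)^2/t}$ and the bridge repulsion factor $\asymp (t^{1-\alpha}+y)/t$ that you compute separately. The subsequent simplification of $e^{-(\mu t+y)^2/t}$ via the choice of $\mu$ and $y=O(t/\log t)$ is exactly the paper's calculation.
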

	
	\begin{lemma} \label{lemma: lowerboundlemma2} Let $Z_{\delta}$ be as defined in $\eqref{exceedances}$ with $\delta > 0$. Then we have 
		$$\EE[Z_{\delta}^2] \ll  \left(1+\frac{y}{t^{1-\alpha}}\right) e^{-2\sqrt{1+\theta} y} e^{- y^2/t}, $$
		where $y(t) \to \infty$. 
	\end{lemma}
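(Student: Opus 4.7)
\textbf{Proof plan for Lemma \ref{lemma: lowerboundlemma2}.} The plan is a standard second moment method adapted to the two-scale structure of the barrier. Expanding
\begin{equation*}
\EE[Z_\delta^2] \;=\; \EE[Z_\delta] \;+\; \sum_{\substack{h,h' \in \mathcal{H} \\ h \neq h'}} \PP\bigl(J(h) \cap J(h')\bigr)
\end{equation*}
reduces the problem to bounding the off-diagonal sum by $(1 + y/t^{1-\alpha}) e^{-2\sqrt{1+\theta}y} e^{-y^2/t}$, since the diagonal equals $\EE[Z_\delta]$ and is already of the required order by Lemma \ref{lemma: lowerboundlemma1}. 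I would organize off-diagonal pairs by branching scale: for $0 \leq k \leq t-1$, let $P_k = \{(h,h') : e^{-(k+1)} < |h - h'| \leq e^{-k}\}$. Since $\mathcal{H}$ has spacing $e^{-t}$ and length $2e^{t\theta}$, one has $|\mathcal{H}| \asymp e^{t(1+\theta)}$, and for each $h$ there are $\asymp e^{t-k}$ partners in the relevant annulus, so $|P_k| \ll e^{t(1+\theta)} \cdot e^{t-k}$.

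By the covariance estimate of Corollary 2, for $(h,h') \in P_k$ the two walks satisfy $S_j(h) \approx S_j(h')$ for $j \leq k$, while the increments $S_t(h) - S_k(h)$ and $S_t(h') - S_k(h')$ behave as nearly independent Gaussian walks of length $t-k$. Decomposing on the common value $u = S_k(h) \approx S_k(h')$ at the branching time, I would estimate
\begin{equation*}
\PP\bigl(J(h) \cap J(h')\bigr) \;\ll\; \sum_{u \leq M(k)} \PP(\mathcal{B}_k(u)) \cdot \Bigl[\PP\bigl(S_t \in [\mu t+y,\mu t+y+\delta],\, S_j < M(j)\;\forall k<j\leq t \,\big|\, S_k = u\bigr)\Bigr]^{2}.
\end{equation*}
The outer probability is handled by the Ballot theorem as in \eqref{secondsumfirstprob}, producing a factor $k^{-3/2} e^{-u^2/k}(t^{1-\alpha}+y)(M(k)-u+1)$. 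The inner probability is bounded by a second ballot/bridge estimate, contributing a factor $(t-k)^{-3/2}(M(k)-u+1)(\delta + O(1))$ times a Gaussian endpoint density. Substituting $v = M(k) - u$ and summing in $v$ against the Gaussian factors collapses them, and multiplying by $|P_k|$ reduces everything to a $k$-sum of the type already analyzed in \eqref{needssimplifying}--\eqref{why11/4 2}, which telescopes to the claimed bound.

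The main obstacle is ensuring that the $(1 + y/t^{1-\alpha})$ prefactor appears exactly once rather than being inadvertently squared by the two independent bridges. This is precisely why the barrier carries the $(1-k/t)t^{1-\alpha}$ term: the dominant branching scales are $k \gtrsim t - t^\alpha$, and there the Gaussian integration over the endpoint density $e^{-(\mu(t-k)+v)^{2}/(t-k)}$ converts the two ballot factors $(M(k)-u+1)^{2} = v^{2}$ into a single $(t^{1-\alpha}+y)$ after completion of the square, matching the first-moment prefactor. The counting gain $e^{t-k}$ from $|P_k|$ is absorbed by the two bridge normalizations $(t-k)^{-3/2}$ together with the Gaussian tails by the same mechanism used in the upper bound of Section \ref{Section 2}; once this is in place, the bump function $\psi$ is handled exactly as in \eqref{firstsum3}.
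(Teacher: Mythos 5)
Your overall strategy---expand the second moment, organize the off-diagonal pairs by the branching scale $k$ with $|h-h'|\asymp e^{-k}$, condition on the value at the branching time, and apply ballot estimates before and after the branch---matches the paper's treatment of the near-diagonal pairs. But there are two genuine gaps. First, your annuli $P_k$, $0\le k\le t-1$, only cover pairs with $|h-h'|\le 1$. Since $\mathcal{H}$ has length $2e^{t\theta}\gg 1$, almost all of the off-diagonal pairs satisfy $|h-h'|>1$ and lie in no $P_k$. Their contribution is $O(\EE[Z_\delta]^2)$, which is acceptable, but establishing this requires an argument: the walks at distant points are still weakly correlated, so $\PP(J(h)\cap J(h'))$ does not simply factor. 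The paper handles this with a Gaussian comparison inequality (Lemma \ref{lemma: lowerboundlemma3}) that replaces the correlated pair by an independent one at bounded cost, and splits the far range into $1<|h-h'|\le e^{t\theta/2}$ (where the reduced count yields a gain $e^{-t\theta/2}$) and $|h-h'|>e^{t\theta/2}$ (the main $O(\EE[Z_\delta]^2)$ term). Some version of this step is indispensable.

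Second, your factorization $\PP(J(h)\cap J(h'))\ll\sum_{u}\PP(\mathcal{B}_k(u))\,[\PP(\cdots\mid S_k=u)]^2$ treats the two walks as identical up to time $k$ and exactly independent afterwards; by Corollary 2 neither holds, since the pre-branching covariance is $\tfrac12 j+O(e^{2j}|h-h'|^2)$ and the post-branching covariance is $O(e^{-k}|h-h'|^{-1})$, small but nonzero. The paper makes this rigorous by passing to the center of mass $\overline{S}_{k}$ and difference $S_k^{\perp}$, summing over both the average $v$ and the difference $d$ at the branching time (the $d$-sum converges thanks to a factor $e^{-cd^2}$), and again invoking Lemma \ref{lemma: lowerboundlemma3}. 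Two smaller points: the lower-bound barrier carries no logarithmic bump $\psi$, so convergence of the $k$-sum near $k=t$ comes from the factor $e^{-\theta(t-k)}$ together with a split at $k=t-t^{\alpha+\epsilon}$, not from the mechanism of \eqref{firstsum3}; and bounding your diagonal term by the required quantity needs the matching \emph{upper} bound $\EE[Z_\delta]\ll(1+y/t^{1-\alpha})e^{-2\sqrt{1+\theta}y}e^{-y^2/t}$ (via the ballot upper bound), since Lemma \ref{lemma: lowerboundlemma1} only gives the lower bound.
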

	The proof of Lemma \ref{lemma: lowerboundlemma1} is a straightforward application of the Ballot Theorem \ref{thm: ballot linear LB}. We mainly focus on the proof of Lemma \ref{lemma: lowerboundlemma2}.
	\begin{proof}[Proof of Proposition \ref{lbprop}]
		By Lemma \ref{lemma: lowerboundlemma1}, Lemma \ref{lemma: lowerboundlemma2} and the Paley-Zygmund inequality, we have 
		$$\PP(Z_{\delta} \geq 1) \geq \dfrac{ (\EE[Z_{\delta}])^2}{\EE[Z_{\delta}^2]} \gg \left(1+ \frac{y}{t^{1-\alpha}} \right) e^{-2\sqrt{1+\theta}y}e^{-y^2/t}.$$
	\end{proof}
	
	\begin{proof} [Proof of Lemma \ref{lemma: lowerboundlemma1}]
		By linearity of expectation, Fubini's Theorem, and translation invariance of the distribution, we have 
		$$	\EE[Z_\delta] 	= 2e^{t+t \theta} \;\PP(J(0)).  $$
		By the Ballot Theorem \ref{thm: ballot linear LB}, we obtain
		\begin{align} 
			\label{eq:1} 
			\PP (J(0)) \gg  \frac{ t^{1-\alpha} + y}{t} \left( \frac{ e^{\frac{- (\mu t + y)^2}{t}}}{\sqrt{t}}  \right) = \left(1+ \frac{y}{t^{1-\alpha}}\right) t^{- 1/2-\alpha}	e^{ -\frac{(\mu t + y)^2}{t}}. 
		\end{align}
		The exponential in \eqref{eq:1}  simplifies to 
		 \begin{align*} e^{ -\frac{(\mu t + y)^2}{t}} =  e^{- (1+\theta)t} t^{1/2+\alpha} e^{-O( (\log^2t )/ t)} e^{-2\mu y - y^2/t}.
		\end{align*} 
		It follows that 
		\begin{align*} 
			\PP(J(0)) \gg \left(1+ \frac{y}{t^{1-\alpha}}\right) e^{- (1+\theta)t}  e^{-2\mu y - y^2/t}. 
		\end{align*}
	Recall that $\exp(-2\mu y) \ll e^{-2 \sqrt{1+\theta y}}$ since $y = O(\frac{t}{\log t}).$ It follows that 
		\begin{align*}
			\EE[Z_\delta] = 2e^{t+t \theta} \;\PP(J(0)) \gg \left(1 + \frac{y}{t^{1-\alpha}}\right) e^{-2\sqrt{1+\theta} y} e^{\frac{-y^2}{t}}.
		\end{align*}

	\end{proof}

	\begin{proof} [Proof of Lemma \ref{lemma: lowerboundlemma2}]
		
		We first express the second moment as a sum over all possible pairs $(h, h^{'})$ in $I=[-e^{t\theta}, e^{t\theta}]$:  
		\begin{align}
			\label{mainexpression}
			\EE[(Z_{\delta})^2] &= \sum_{\substack{(h,h^{'}) \in I } } \PP(J(h) \cap J(h^{'}) ) \leq (\#\HH)  \sum_{|h| \leq e^{t\theta}} \PP(J(h) \cap J(0)).
		\end{align}
		By translation invariance of the distribution, we let $h^{'}=0$. For precise estimates, we divide $|h-h^{'}| < e^{t \theta}$ into three cases to evoke the covariance structure of the processes. The three cases are $1 < |h-h^{'}| \leq e^{t\theta/2}$,  $|h-h^{'}| > e^{t\theta/2},$ and $|h-h^{'}| \leq 1$. The main contribution comes from $|h-h^{'}| \leq 1$.  In particular, there is a coupling or decoupling of walks in each case for which we use the following lemma.
		
		\begin{lemma}[Lemma 7 of \cite{ABR2} ]
			\label{lemma: lowerboundlemma3}
			Let $|\rho| < \mathfrak{s}^2$. Consider the following Gaussian vectors and their covariance matrices: 
			\begin{align*}
				&(\mathcal{N}_1, \mathcal{N}_1^{'}),   &\mathcal{C}_1=  \left (
				{\begin{array}{cc}
						\mathfrak{s}^2 & \rho \\
						\rho & 	\mathfrak{s}^2 \\
				\end{array} } \right), \\
				&(\mathcal{N}_2, \mathcal{N}_2^{'}), &\mathcal{C}_2=  \left (
				{\begin{array}{cc}
						\mathfrak{s}^2 + |\rho| & 0 \\
						0 & 		\mathfrak{s}^2 + |\rho|  \\
				\end{array} } \right).
			\end{align*}
			Then for any measurable set $A \subset \mathbb{R}^2$, we have 
			$$\PP( (\mathcal{N}_1, \mathcal{N}_1^{'}) \in A) \leq \sqrt{ \frac{ \mathfrak{s}^2 + |\rho|}{ \mathfrak{s}^2 - |\rho|}} \cdot \PP( (\mathcal{N}_2, \mathcal{N}_2^{'}) \in A). $$
		\end{lemma}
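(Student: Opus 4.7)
The plan is to prove the statement by comparing the two Gaussian densities on $\RR^2$ pointwise and then integrating. Explicitly, writing $f_1$ and $f_2$ for the densities of $(\mathcal{N}_1, \mathcal{N}_1')$ and $(\mathcal{N}_2, \mathcal{N}_2')$, the goal is to establish
\[ f_1(x,y) \;\leq\; \sqrt{\tfrac{\mathfrak{s}^2+|\rho|}{\mathfrak{s}^2-|\rho|}}\; f_2(x,y) \qquad \text{for all } (x,y) \in \RR^2, \]
after which integrating this pointwise inequality against the indicator of $A$ yields the conclusion immediately.

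First I would write out the two densities explicitly. Since $\det \mathcal{C}_1 = \mathfrak{s}^4 - \rho^2 = (\mathfrak{s}^2 - |\rho|)(\mathfrak{s}^2 + |\rho|)$ and $\det \mathcal{C}_2 = (\mathfrak{s}^2 + |\rho|)^2$, the ratio of the two normalizing constants is exactly the claimed prefactor $\sqrt{(\mathfrak{s}^2+|\rho|)/(\mathfrak{s}^2-|\rho|)}$. It then remains to check that the exponent of $f_1/f_2$ is nonpositive at every point of $\RR^2$.

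The crux of the argument is a short algebraic identity. Placing the two quadratic forms that appear in the exponent of $f_1/f_2$ over the common denominator $2(\mathfrak{s}^4 - \rho^2)$, the numerator collapses to $-\rho(x-y)^2$ when $\rho \geq 0$ and to $\rho(x+y)^2 = -|\rho|(x+y)^2$ when $\rho < 0$. Either way the resulting exponent is $\leq 0$, which gives the pointwise bound. The hypothesis $|\rho| < \mathfrak{s}^2$ enters here to ensure that $\mathcal{C}_1$ is positive definite, so that $f_1$ is a genuine density and the constant on the right is finite. There is no substantive obstacle in this argument; the only bookkeeping task is to handle the two sign cases of $\rho$ separately in the completion of squares, after which integration against $\mathbf{1}_A$ finishes the proof.
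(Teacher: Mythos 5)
Your proof is correct: the ratio of normalizing constants is exactly $\sqrt{(\mathfrak{s}^2+|\rho|)/(\mathfrak{s}^2-|\rho|)}$, and the difference of quadratic forms over the common denominator reduces to $-\rho(x-y)^2$ or $-|\rho|(x+y)^2$, both nonpositive, so the pointwise density bound and hence the integrated bound follow. The paper itself only defers to Lemma 7 of \cite{ABR2}, describing the argument as ``expanding the covariance matrix,'' and your pointwise density comparison is precisely that argument.
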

		The proof of Lemma \ref{lemma: lowerboundlemma3} is straighforward and entails expanding the covariance matrix. See Lemma 7 of \cite{ABR2} for more details. \\\\
		$\bullet \textbf{ Case 1: } 1 < |h-h^{'}| \leq e^{t\theta / 2}$\\\\
		Observe that there are $\ll e^{t\theta / 2 + t}$ number of $h'$s of length $e^{-t}$ in an interval of length $2(e^{t\theta/2} - 1).$  Hence,  
		\begin{align} \label{case1 for lb} 
			(\# \mathcal{H}) \sum_{1 < |h-h^{'}| \leq e^{t\theta / 2}} \PP(J(h) \cap J(h^{'}))  \ll e^{t(1+\theta)}e^{t\theta / 2 + t} \; \PP(J(h) \cap J(0)).
		\end{align}
		We use translation invariance of the distribution in \eqref{case1 for lb}. To find $\PP(J(h) \cap J(0))$, we use Lemma \ref{lemma: lowerboundlemma3} and the Ballot Theorem \ref{thm: ballot linear UB}: 
		\begin{align}
			\PP(J(h) \cap J(0)) &\ll \PP(J(0))^2 \notag \\
			&\ll \left( \frac{ (t^{1-\alpha} + y + 1)}{t} \frac{ \exp (- (\mu t + y)^2/t)}{t^{1/2}} \right)^2  \notag \\
			&\ll \left( \frac{ (t^{1-\alpha} + y + 1)}{ t^{3/2}} \exp( -\mu^2 t + \frac{1+2\alpha}{2} \log t - 2 \mu y - y^2/t) \right)^2  \notag \\
			&\ll \left(\left(1 + \frac{y}{t^{1-\alpha}}\right) \exp( -t (1+\theta) - 2\sqrt{1+\theta}y - y^2/t) \right)^2. \label{eqn: probest1} 
		\end{align}
		After applying the probability estimate \eqref{eqn: probest1} to the inequality \eqref{case1 for lb}, we obtain 
		\begin{align} \label{case1lb}
			(\# \mathcal{H}) \sum_{1 < |h | \leq e^{t\theta / 2}} \PP(J(h) \cap J(0)) \ll \left(\left( 1 + \frac{y}{t^{1-\alpha}} \right) e^{-2 \sqrt{1+\theta} y} e^{- y^2/t} \right)^2  e^{- t\theta /2} . 
		\end{align}
		Recall that $ \EE[Z_\delta] \gg \left(1 + \frac{y}{t^{1-\alpha}}\right) e^{-2\sqrt{1+\theta} y} e^{-y^2/t}$. Hence, the estimate \eqref{case1lb} is  $o(\EE[Z_{\delta} ]^2).$ \\\\
		$\bullet \textbf{ Case 2: }  |h-h^{'}| > e^{t\theta / 2}$ \\\\
		We proceed as in the previous argument. Since there are $2e^{t+t \theta}(1-o(1)) \ll e^{t+t\theta}$ number of $h'$s of length $e^{-t}$ in an interval of length $2(e^{t\theta} - e^{t\theta/2})$, we obtain
		\begin{align}
			(\# \mathcal{H}) \sum_{ |h | > e^{t\theta / 2}} \PP(J(h) \cap J(0))  \ll\left(\left( 1 + \frac{y}{t^{1-\alpha}} \right) e^{-2 \sqrt{1+\theta} y} e^{- y^2/t} \right)^2 .
		\end{align}
		This is $O(\EE[Z_{\delta} ]^2).$
		\\\\
		$\bullet \textbf{ Case 3: } |h-h^{'}| \leq 1$\\\\
	 We handle case 3 differently by studying subintervals that depend on the branching time $k_b= \log |h-h^{'}|^{-1}$. More precisely, we consider subintervals where $e^{-k_b-1}< |h - h^{'}| \leq  e^{-k_b}$ for $0 < k_b \leq t$.  Define $(\EE[Z_{\delta}^2] )_{k_b}$ to be the contribution from such intervals:
		\begin{align} \label{branchtimesecondmoment}
		(\EE[Z_{\delta}^2] )_{k_b} = (\# \mathcal{H}) \sum_{e^{-k_b-1}<|h| \leq e^{-k_b} } \PP(J(0) \cap J(h))
		\end{align}
	so that 
	\begin{equation*}
			(\# \mathcal{H}) 	\sum_{|h| \leq 1} \PP(J(0) \cap J(h)) = 	\sum_{k_b} 	(\EE[Z_{\delta}^2] )_{k_b}. 
		\end{equation*}
		(By translation invariance of the distribution, we let $h^{'}=0$.) Observe that the number of $h$'s such that $|h| \approx e^{-k_b}$ in a discretized interval of order one is $e^t \cdot e^{-k_b}$. Continuing from \eqref{branchtimesecondmoment} we have
		\begin{align}
			\label{case3main}
			&\ll e^{t+t\theta} e^{t-k_b}  \;\; \PP(J(h) \cap J(0)).
		\end{align}
	
	We will estimate $\PP(J(h) \cap J(0))$ in two cases, which cover two ranges of $k_b$.  The first case covers the range $1 \leq k_b < t- t^{\alpha +\epsilon}$ for some fixed $\epsilon$ with $0 < \epsilon < 1-\alpha$. The second case covers remaining $k$. We omit the barrier in the first case for the same reason given in Section \ref{Section 2}.   The dominant contribution comes from the latter range of $k$.

			We will now prove the two cases. \\\\
		$\bullet \;\; \text{The contribution from} \;\; |h-h^{'}| \leq 1 \;\;\text{for} \;\;1 \leq k_b < t-t^{\alpha+\epsilon}.$ \\\\
		To find the probability in \eqref{case3main} for $1 \leq k_b < t-t^{\alpha+\epsilon}$, we drop the barrier from $J(h)$ and $J(0)$: 
		\begin{align}			
			\label{eqn: case3a}
			\PP(J(h) \cap J(0))	&\ll \PP(S_t(h) > \mu t + y, S_t(0) > \mu t + y).
		\end{align} 
		It is convenient to rewrite the two processes $S_t(h)$ and $S_t(0)$ in terms of their center of mass $\overline{S}_k=\frac{S_k(h) + S_k(0)}{2}$ and difference $S^{\perp}_k= \frac{ S_k(h) - S_k(0)}{2}$. This is because $S_k(0) \approx S_k(h)$ for $k \leq k_b$.
		Let $S_{k_b}(h) = q_1$ and $S_{k_b}(0)=q_2$ so that the average $v=\frac{q_1+q_2}{2}$ and difference $d=\frac{q_1- q_2}{2}$. We rewrite $S_t(h)$ and $S_t(0)$: 
		\begin{align}
			\label{eqn: centerofmass1}
		S_t(h) = q_1 + S_{k_b, t}(h) = v+d + S_{k_b, t}(h)
		\end{align}
		and 
		\begin{align}
			\label{eqn: centerofmass2}
			S_t(0) = q_2 + S_{k_b, t}(0) = v-d + S_{k_b,t}(0).
		\end{align}
		Using \eqref{eqn: centerofmass1} and \eqref{eqn: centerofmass2}, the probability in  \eqref{eqn: case3a} 
		\begin{align}
			\label{eqn: centerofmass3}
			&= \PP(v+d + S_{k_b, t}(h) \in (\mu t+y, \mu t + y + \delta],  v-d + S_{k_b, t}(0) \in (\mu t+y, \mu t + y + \delta]). \hfill
		\end{align} 
		Next, we decompose the probability \eqref{eqn: centerofmass3} over all possible averages $v$ and differences $d \in \ZZ$ at the branching time $k_b$:
		\begin{align} 
			&= \sum_{v < M(k_b)} \sum_{d} \PP( \overline{S}_{k_b} \in (v,v+1], S_{k_b}^{\perp} \in (d, d+1]) \cdot \label{eqn:k}  \\
			&\phantom{{}=1}	\PP(S_{k_b,t}(h) + v+d \in (\mu t+y, \mu t + y + \delta], S_{k_b,t}(0) + v-d \in (\mu t+y, \mu t + y + \delta]).  \notag 
		\end{align}
		The two probabilities in \eqref{eqn:k} are by the independence of random walk increments. To find the first probability in \eqref{eqn:k}, we use Lemma \ref{lemma: lowerboundlemma3} to obtain 
		\begin{align} 
			\label{case3aprob1} 
			\PP( \overline{S}_{k_b} \in (v,v+1], S_{k_b}^{\perp} \in (d, d+1])\ll \frac{ e^{-v^2/k_b}}{\sqrt{\pi k_b} }e^{-cd^2},
		\end{align} 
		where $c < 0$. Similarly, for the second probability in \eqref{eqn:k}, we have 
		\begin{align}
			&\PP(S_{k_b, t}(h) + v+d \in (\mu t+y, \mu t + y + \delta], S_{k_b, t}(0) + v-d \in (\mu t+y, \mu t + y + \delta])  \notag\\
			&\ll \frac{ \exp{ - \frac{(-v-d+\mu t + y)^2}{t-k_b}}\exp{ - \frac{(-v+d+\mu t + y)^2}{t-k_b}}}{\pi (t-k_b)}. 	\label{case3aprob2}
		\end{align}
		We are now in a position to find the contribution to 
		$(\EE[ Z_{\delta}^2])_{k_b}$ for $1 \leq k_b < t-t^{\alpha+\epsilon}$. By the probability estimates in \eqref{case3aprob1} and \eqref{case3aprob2}, we have 
		\begin{align*} 
			(\EE[ Z_{\delta}^2])_{k_b} &\ll e^{t+t\theta} e^{t-k_b}\;\; \PP(J(h) \cap J(0)) \\
			&\ll e^{t+t\theta} e^{t-k_b}\;\;\sum_{v < M(k_b)} \;\sum_{d \in \ZZ} \frac{ e^{-v^2/k_b}}{\sqrt{k_b} }e^{-cd^2} \frac{ e^{ - \frac{(-v-d+\mu t + y)^2}{t-k_b}}e^{ - \frac{(-v+d+\mu t + y)^2}{t-k_b}}}{(t-k_b)}.
		\end{align*} 
		By a change of variable with $\overline{v} = v- \mu( k_b)$, we obtain
		\begin{align}
			&\ll e^{t+t\theta} e^{t-k_b}\sum_{ \overline{v} < (t-k_b)\theta+y+1}  \sum_{d}  \frac{e^{- \frac{ (\overline{v} + \mu k)^2}{k_b}}}{\sqrt{k_b}} e^{-cd^2} \frac{ e^{- \frac{(\mu(t-k_b) + y-\overline{v}+d)^2}{t-k_b}}     e^{- \frac{(\mu(t-k_b) + y-\overline{v}-d)^2}{t-k_b}}    }{t-k_b} \notag \\
			&\ll \frac{e^{t+t\theta} e^{t-k_b} e^{-2\mu^2(t-k_b) -\mu^2(k_b)  - 4\mu y}}{\sqrt{k_b}(t-k_b)} \sum_{ \overline{v} < (t-k_b)\theta+y+1} e^{2\mu \overline{v} - \frac{2(y-\overline{v})^2}{t-k_b} - \frac{\overline{v}^2}{k_b}} \sum_d e^{-cd^2-\frac{2d^2}{t-k_b}}. 	\label{eqn: case3aCOV}
		\end{align} 
		The sum over $d$ is finite; to handle the sum over $\overline{v}$, we use Laplace's method. Let $f(\overline{v}) = 2\mu \overline{v} - \frac{2(y-\overline{v})^2}{t-k_b} - \frac{\overline{v}^2}{k_b}$. Now the maximum of $f(\overline{v})$ is where $k_b$ is nearly close to $t$. Subbing in $t-1$ for $k_b$, we find that $f(\overline{v})$ has a stationary point at $\overline{v_0} = \frac{ \mu + 2y}{2 + \frac{1}{t-1}}.$
		Hence, by Laplace's method, we obtain 
		\begin{align}
			\label{eqn: laplace}
			\sum_{ \overline{v} < (t-k_b)\theta+y+1} e^{f(\overline{v})} \leq \int_{-\infty}^{(t-k_b)\theta + y + 1} e^{f(\overline{v})} \; d \overline{v}  
			\ll e^{f(\overline{v_0})} \ll e^{2 \sqrt{1+\theta} y } e^{- y^2/t}.
		\end{align}
		After applying the estimate \eqref{eqn: laplace} to \eqref{eqn: case3aCOV}, the contribution is now
		\begin{align}
			(\EE[ Z_{\delta}^2])_{k_b}	&\ll \frac{e^{t+t\theta} e^{t-k_b} e^{-2\mu^2(t-k_b) -\mu^2(k_b)  - 4\mu y}}{\sqrt{k_b}(t-k_b)}  e^{2\sqrt{1+\theta} y} e^{ - y^2/t}. \notag
		\end{align}
	This simplifies to
		\begin{align} 
			&\ll \frac{e^{- \theta(t-k_b) + (1+2 \alpha )\log (t-k_b) + \frac{1+2\alpha}{2} \log k_b} e^{-2\sqrt{1+\theta}y} e^{- y^2/t}}{ \sqrt{k_b} (t-k_b)} \notag \\
			&\ll e^{- \theta(t-k_b) + 2 \alpha \log (t-k_b) + \alpha \log k_b} e^{-2\sqrt{1+\theta} y} e^{- y^2/t}.	\label{eqn: case3aeqn4} 
		\end{align} 
		In line \eqref{eqn: case3aeqn4}, we use the fact that $(\log x)/x$ is a decreasing function for $x > e$. 
		
		It remains to sum over $k_b$ up to $t-t^{\alpha + \epsilon}$ for a fixed $\epsilon$ with $0 < \epsilon < 1-\alpha$. (We choose  $t-t^{\alpha+\epsilon}$ to ensure sum convergence.) After performing an index change by letting $\hat{k} = t-k_b$, we obtain 
		\begin{align} \label{case3A estimate} 
			\sum_{k_b=1}^{t-t^{\alpha + \epsilon}}  	(\EE[ Z_{\delta}^2])_{k_b} &= \sum_{\hat{k} = t^{\alpha + \epsilon}}^{t-1} e^{- \theta(\hat{k}) + 2 \alpha \log \hat{k} + \alpha \log (t-\hat{k})} e^{-2\sqrt{1+\theta} y - y^2/t}  \ll e^{-2\sqrt{1+\theta} y} e^{ - y^2/t}.
		\end{align}
		This proves the case for $1 \leq k_b < t-t^{\alpha + \epsilon}$. The estimate \eqref{case3A estimate} is $o( \EE[Z_\delta]^2)$. \\\\
	$\bullet  \;\; \text{The contribution from} \;\;  |h-h^{'}| \leq 1 \;\;\text{for} \;\; t-t^{\alpha+\epsilon} \leq k_b \leq t$. \\\\		
		We proceed similarly to the case for $1 \leq k_b < t-t^{\alpha +\epsilon}$. We continue from \eqref{eqn:k}, but preserve the barrier constraint in the events $J(h)$ and $J(0)$: 
		\begin{align}
			 &(\# \mathcal{H}) \sum_{e^{-k_b-1}<|h| < e^{-k_b} } \PP(J(h) \cap J(0)) \notag \\
			&=e^{t+t\theta} e^{t-k_b}\;\; \sum_{v < M(k_b)} \sum_{d} \PP( \overline{S}_{k_b} \in (v,v+1], S_{k_b}^{\perp} \in (d, d+1], S_{k_b} \leq M(k_b)) \cdot\notag \\
			&\PP(S_{k_b, \;t}(h) + v+d \in (\mu t+y, \mu t + y + \delta], S_{k_b, \;t}(0) + v-d \in (\mu t+y, \mu t + y + \delta],  \notag  \\
			&\phantom{{}=1} S_{k_b, \;l}(h) < M(l),\;S_{k_b, \;l}(0) < M(l);\;\; \forall l \in [k_b+1, t] \cap \mathbb{Z}). \label{eqn: case3beqn1}
		\end{align}
		
We will estimate the two probabilities in \eqref{eqn: case3beqn1} over two ranges of $v$.  Recall that $v$ denotes the average between $S_{k_b}(0)$ and $S_{k_b} (h)$. Observe that the barrier at the branching time is much greater than any negative value of $v$. Hence, we omit the barrier for $v < 0$, but preserve it for $v \geq 0$.  
		
		Observe that for the first probability in  \eqref{eqn: case3beqn1}, the independence of $\overline{S}_{k_b}$ and $S^{\perp}_{k_b}$ gives
		\begin{align} \label{firstprobabilitycase3B} 
			&\PP( \overline{S}_{k_b} \in (v,v+1],  \overline{S}_{k_b} \leq M(k_b))\; \PP(S_{k_b}^{\perp} \in (d, d+1]).
		\end{align} We will now consider both $v > 0$ and $v \leq 0$ for \eqref{firstprobabilitycase3B}. For $v < 0$, we use a Gaussian estimate to obtain 
		\begin{align*}
			&\PP( \overline{S}_{k_b} \in (v,v+1])\; \PP(S_{k_b}^{\perp} \in (d, d+1]) \ll e^{-v^2}e^{-cd^2},
		\end{align*} 
		for some constant $c > 0$. 
		For $v \geq 0$, we preserve the barrier and use the Ballot Theorem \ref{thm: ballot linear UB} to obtain
		\begin{align}
			\label{eqn: case3b prob1 barrier}
			&\ll \frac{ ( t^{1-\alpha} + y+ 1) (\mu k_b + b(k_b) - v + 1)}{k_b} \frac{\exp (- v^2/ k_b)}{\sqrt{k_b}}\;e^{-cd^2}.
		\end{align} 
		The concludes the estimate for the first probability in \eqref{eqn: case3beqn1}. 
		
		It remains to find the second probability in \eqref{eqn: case3beqn1}. For ease of notation, let $m_y(t) = \mu t + y$ and let $[[ k_b+1, t]]=[k_b +1, t] \cap \mathbb{Z}$. By an application of Lemma 1.3, we obtain 
		\begin{align}
			&\PP(S_{k_b, \;t}(h) + v+d \in (m_y(t), m_y(t) + \delta], S_{k_b, \; t}(0) + v-d \in ( m_y(t), m_y(t)+ \delta], \notag \\
			&\phantom{{}=1} S_{k_b, \;l}(h) < M(l), S_{k_b, \;l}(0) < M(l)], \forall l \in [[ k_b+1, t]]) \notag \\
			&\ll  \PP(S_{k_b, \; t}(h) + v+d \in (m_y(t), m_y(t) + \delta], S_{k_b, \;l}(h) < M(l),\forall l \in [[ k_b+1, t]]) \notag \label{eqn: case3bsecondprob} \\
			&\phantom{{}=1} \PP (S_{k_b, \; t}(0) + v-d \in (m_y(t), m_y(t) + \delta], S_{k_b, \; l}(0) < M(l)], \forall l \in [[ k_b+1, t]]).
		\end{align} 
		For $v < 0$, we remove the barrier from each probability in \eqref{eqn: case3bsecondprob}. By a Gaussian estimate, we obtain 
		\begin{align}
		& \PP(S_{k_b, \; t}(h) + v+d \in (m_y(t), m_y(t) + \delta]) \cdot \PP (S_{k_b, \; t}(0) + v-d \in (m_y(t), m_y(t) + \delta]) \notag \\	&\ll   \frac{e^{- \frac{(\mu t + y-v+d)^2}{t-k_b}}}{\sqrt{t-k_b}}     \frac{e^{- \frac{(\mu t + y-v-d)^2}{t-k_b}}}{\sqrt{t-k_b}}. \notag 
		\end{align}
		For $v \geq 0$, we preserve the barriers in \eqref{eqn: case3bsecondprob} and apply the Ballot Theorem \ref{thm: ballot linear LB}:
		\begin{align} 	
			\label{eqn: case3b prob2 barrier}
			&\ll  \frac{(\mu k_b + b(k_b) - v - d+1)(\mu k_b + b(k_b) - v + d + 1) }{{(t-k_b)^2} }  \frac{e^{- \frac{(\mu t + y-v+d)^2}{t-k_b}}}{\sqrt{t-k_b}}     \frac{e^{- \frac{(\mu t + y-v-d)^2}{t-k_b}}}{\sqrt{t-k_b}}.
		\end{align}
	This concludes the estimate for the second probability in \eqref{eqn: case3beqn1}. Observe that the dominate estimates for both the first and second probability in \eqref{eqn: case3beqn1} come from $v \geq 0$. 
		
		We are now ready to estimate the contribution to $(\EE[ Z_{\delta}^2])_{k_b}$ for $ t-t^{\alpha+\epsilon} < k_b \leq t$.  Apply the two probability estimates \eqref{eqn: case3b prob1 barrier}  and \eqref{eqn: case3b prob2 barrier} to the contribution \eqref{eqn: case3beqn1}, and perform a change of variable with $\overline{v} = v- \mu k_b$:
		\begin{align}
		&(\EE[ Z_{\delta}^2])_{k_b}	\notag \\
		&\ll e^{t+t\theta} e^{t-k_b} \sum_{ -\mu k_b \leq \overline{v} < (t-k_b)\theta+y+1} \sum_d
			\frac{ ( t\theta + y+ 1) ( b(k_b) - \overline{v} + 1)}{k_b} \frac{e^{- (\overline{v} + \mu k_b)^2/ k}}{\sqrt{k_b}}\;e^{-cd^2}\notag  \\
			&\phantom{{}=1} 
			\frac{(-\overline{v}+ b(k_b) - d+1)(-\overline{v} + b(k_b) + d + 1) }{{(t-k_b)^2} }  \frac{e^{- \frac{(\mu (t-k_b) + y-\overline{v}+d)^2}{t-k_b}}}{\sqrt{t-k_b}}     \frac{e^{- \frac{(\mu (t-k_b) + y-\overline{v}-d)^2}{t-k_b}}}{\sqrt{t-k_b}}.
		\end{align}
		This simplifies to 
		\begin{align}
			&\ll  (t\theta + y + 1)  \frac{e^{t+t\theta} e^{t-k_b} e^{-2\mu^2(t-k_b) -\mu^2(k_b)  - 4\mu y}}{k_b^{3/2}(t-k_b)^3} \notag \\  \label{eqn: b} 
			&\phantom{{}=1} \sum_{-\mu k_b \leq  \overline{v} < (t-k_b)\theta+y+1} e^{2\mu \overline{v} - \frac{2(y-\overline{v})^2}{t-k_b} - \frac{\overline{v}^2}{k_b}+ 3\log ( (t-k_b)\theta + y + 1- \overline{v})} \sum_d e^{-cd^2-\frac{2d^2}{t-k_b}}.
		\end{align}
		
		The sum over $d$ is finite. To find the sum over $\overline{v}$, we apply Laplace's method. Using the fact that $\overline{v} \geq - \mu k_b$, we let $f(\overline{v}) = 2\mu \overline{v} - \frac{2(y-\overline{v})^2}{t-k_b} - \frac{\overline{v}^2}{k_b}+ 3\log ( (t-k_b)\theta + y + 1 + \mu k_b)$. 
		When $k_b=t-1$, the quantity $f(\overline{v})$ is maximal at $\overline{v_0}= y+ \frac{\mu}{4} + \frac{1}{2}$. Hence,
		
		\begin{align} 
			\int_{-\infty}^{ (t-k_b)\theta + y}e^{f(\overline{v})} \; d\overline{v} &\leq \sqrt{ \frac{2\pi}{ |f^{''}(\overline{v_0}) |}} e^{f(\overline{v_0})} 
			\ll	 \sqrt{ (t-k_b)} 	( (t-k_b)\theta +1)^3 e^{-y^2/t + 2\mu y}. \label{eqn: a}
		\end{align} 
		
		By the estimate in $(\ref{eqn: a})$, the contribution $(\ref{eqn: b})$ is
		\begin{align}
			(\EE[ Z_{\delta}^2])_{k_b} 	&\ll \label{laplace2}
		\left( 1 + \frac{y}{t^{1-\alpha}} \right)	\sqrt{(t-k_b)} 	( (t-k_b)\theta +1)^3  \notag \\
			&\phantom{{}=1} \frac{e^{t+t\theta} e^{t-k_b} e^{\log t\theta} e^{-2\mu^2(t-k_b) -\mu^2(k_b)  - 2\mu y-y^2/t }}{k_b^{3/2}(t-k_b)^3}.
		\end{align} 
		The exponential in \eqref{laplace2} simplifies to
		\begin{align}
			\frac{e^{-\theta(t-k) + (1+2 \alpha) \log (t-k_b) + (1/2 + \alpha) \log k_b + (1-\alpha)\log t} e^{- 2\mu y-y^2/t }}{k_b^{3/2}(t-k_b)^3} \ll e^{- \theta(t-k_b) }  e^{-2\sqrt{1+\theta}y-y^2/t},
		\end{align}
		since in this range, $k_b \geq t- t^{\alpha + \epsilon}$ implies $k_b \gg t$.
		
		Finally, we sum over $k_b$ and perform an index change by letting $\hat{k} = t-k_b$:
		\begin{align}
			\sum_{k= t-t^{\alpha+\epsilon}}^{t} (\EE [Z_\delta^2])_{k_b} &\ll 	\left( 1 + \frac{y}{t^{1-\alpha}} \right)e^{-2\sqrt{1+\theta}y-y^2/t} \sum_{ \hat{k}=1}^{ t^{\alpha + \epsilon} }  \sqrt{ \hat{k}}  \;( \hat{k}\theta+1)^3 e^{- \hat{k} \theta }  \notag  \\
			&\ll 	\left( 1 + \frac{y}{t^{1-\alpha}} \right)e^{-2\sqrt{1+\theta}y} e^{-y^2/t}.  \label{case3b estimate} 
		\end{align}
		This proves the case for $t-t^{\alpha+\epsilon} \leq k_b \leq t$. The estimate \eqref{case3b estimate} is $O( \EE[Z_\delta])$.
	
	
	\end{proof}
	\section{Left Tail of the Maximum} \label{Section: LeftTail of Max} 
	We prove Theorem \ref{leftbound} by the following two lemmas. 
	
	\begin{lemma}\label{lemma: lbofmax1} 
		Let $y < 0$. Let $\mu$ and $\mathcal{H}$ be as defined in \eqref{slope} and in \eqref{H}. 	For $\delta >0$, let  $$W_\delta=\#\{h \in \mathcal{H}: S_t(h) \in (\mu t + y, \mu t + y + \delta], \;S_k(h) \leq M(k); \forall k \in [1, t] \cap \mathbb{Z} \},$$ where $$M(k) = \mu k + \left(1-\frac{k}{t} \right)t^{1-\alpha}  + 1, \;\; \text{for any} \;\; 0 \leq k \leq t.$$  We have 
		$$\EE[W_{\delta}] \gg \left( 1+ \frac{1}{t^{1-\alpha}} \right)(1-y) e^{-2 \sqrt{1+\theta} y} e^{-y^2/t}.$$ In particular, $\EE[W_\delta] \to \infty$ as $t \to \infty$ and $y \to -\infty$ for fixed $\delta$.
	\end{lemma}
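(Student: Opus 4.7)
The plan is to follow the blueprint of Lemma \ref{lemma: lowerboundlemma1} closely, since the only substantive difference is that now $y<0$, which forces a few sign-sensitive adjustments. By linearity of expectation, Fubini's theorem, and the translation invariance of the distribution of $(S_k(h))_{k\le t}$ in $h$, every summand indexed by $h\in\mathcal H$ contributes the same probability, so
$$\EE[W_\delta] = \#\mathcal H \cdot \PP\bigl(S_t(0) \in (\mu t + y, \mu t + y+\delta],\ S_k(0) \leq M(k)\ \forall k \in [1,t]\cap\ZZ\bigr),$$
and $\#\mathcal H \asymp e^{t(1+\theta)}$.

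The heart of the argument is a single application of the Ballot Theorem (Theorem \ref{thm: ballot linear LB}) with the modified barrier $M(k) = \mu k + (1-k/t)t^{1-\alpha} + 1$. The two gaps to identify are the start gap $M(0) - S_0(0) = t^{1-\alpha} + 1$ and, crucially, the end gap $M(t) - (\mu t + y) = 1 - y$, which is large and positive precisely because $y < 0$ (this is where the $(1-y)$ factor in the claim comes from). The Ballot Theorem then yields
$$\PP \gg \frac{(t^{1-\alpha}+1)(1-y)}{t^{3/2}}\, e^{-(\mu t+y)^2/t}.$$

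I would then simplify the exponential exactly as in the proof of Lemma \ref{lemma: lowerboundlemma1}: expanding $\mu^2 t = (1+\theta)t - \tfrac{1+2\alpha}{2}\log t + O((\log t)^2/t)$ produces
$$e^{-(\mu t+y)^2/t} = e^{-(1+\theta)t}\, t^{1/2+\alpha}\, e^{-O((\log t)^2/t)}\, e^{-2\mu y - y^2/t}.$$
Multiplying by $\#\mathcal H \asymp e^{(1+\theta)t}$ cancels the leading exponential, and combining $t^{1/2+\alpha}$ with $(t^{1-\alpha}+1)/t^{3/2}$ leaves a factor of order $(1+t^{\alpha-1})$, which is $\asymp 1$ and absorbs into the $(1+t^{\alpha-1})$ already in the claim. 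Finally, since $y = O(t/\log t)$, the correction $\bigl|-2\mu y + 2\sqrt{1+\theta}y\bigr| = \tfrac{1+2\alpha}{2\sqrt{1+\theta}}\tfrac{\log t}{t}|y| = O(1)$, so $e^{-2\mu y} \asymp e^{-2\sqrt{1+\theta}y}$. Putting everything together yields
$$\EE[W_\delta] \gg (1-y)\, e^{-2\sqrt{1+\theta}y}\, e^{-y^2/t},$$
which is the stated bound.

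The main delicacy is verifying that Theorem \ref{thm: ballot linear LB} applies cleanly with this slope-$\mu$ barrier and with an endpoint $\mu t + y$ sitting well below the barrier's terminal height; once the correct end gap $1-y$ is identified, the rest is essentially the same algebraic bookkeeping as in Lemma \ref{lemma: lowerboundlemma1}. Divergence of $\EE[W_\delta]$ as $y\to-\infty$ is then immediate, because the prefactor $(1-y)e^{-2\sqrt{1+\theta}y}$ blows up while $e^{-y^2/t}$ remains bounded below for $y=O(t/\log t)$.
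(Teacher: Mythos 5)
Your proposal is correct and follows exactly the route the paper intends: it is the proof of Lemma \ref{lemma: lowerboundlemma1} rerun with the shifted barrier, and the key observation — that the terminal gap $M(t)-(\mu t+y)=1-y$ in the Ballot Theorem lower bound is what produces the $(1-y)$ factor when $y<0$ — is precisely the adaptation the paper describes. The remaining bookkeeping ($\#\mathcal H\asymp e^{t(1+\theta)}$, the expansion of $e^{-(\mu t+y)^2/t}$, and $e^{-2\mu y}\asymp e^{-2\sqrt{1+\theta}y}$ for $y=O(t/\log t)$) matches the paper's computation.
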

	\begin{lemma} \label{lemma: lbofmax2} 
		Let $W_{\delta}$ be as defined in Lemma \ref{lemma: lbofmax1}. Then 
		$$\EE[W_{\delta}^2] =  \EE[W_{\delta}]^2 + O( \EE[W_{\delta}])+ e^{-t\theta/2}O( \EE[W_{\delta}]^2).$$
	\end{lemma}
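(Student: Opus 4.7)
The plan is to mirror the strategy from Lemma \ref{lemma: lowerboundlemma2} for $\EE[Z_\delta^2]$, but now producing the precise asymptotic identity rather than only an upper bound. I would expand
\[
\EE[W_\delta^2] = \sum_{(h,h')\in\mathcal{H}^2} \PP(J(h)\cap J(h'))
\]
and partition by $|h-h'|$ into the three regimes that appear there: $|h-h'|>e^{t\theta/2}$ (far), $1<|h-h'|\leq e^{t\theta/2}$ (intermediate), and $|h-h'|\leq 1$ (close). These should account for the three terms $\EE[W_\delta]^2$, $e^{-t\theta/2}O(\EE[W_\delta]^2)$, and $O(\EE[W_\delta])$ respectively.

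For the far regime I would invoke Lemma \ref{lemma: lowerboundlemma3} to dominate $\PP(J(h)\cap J(h'))$ by a product involving two independent Gaussians with slightly inflated variance; since the covariance $\rho$ here is $O(e^{-t\theta/2})$ by Corollary 1.2, the correction factor $\sqrt{(\mathfrak{s}^2+|\rho|)/(\mathfrak{s}^2-|\rho|)}$ is $1+o(1)$ and each marginal probability reduces to $\PP(J(0))(1+o(1))$ by translation invariance. Multiplying by the number of pairs $(\#\mathcal{H})^2(1-o(1))$ recovers the main term $\EE[W_\delta]^2$. The intermediate regime is handled by the same decoupling; now the count of admissible pairs is only $\asymp e^{t\theta/2}\cdot\#\mathcal{H}$, producing the $e^{-t\theta/2}$ prefactor in front of $\EE[W_\delta]^2$.

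The main work lies in the close regime $|h-h'|\leq 1$, which I would organize by branching time $k_b=\lfloor\log|h-h'|^{-1}\rfloor$ and then pass to the center-of-mass / difference coordinates $\overline{S}_{k_b}$, $S_{k_b}^{\perp}$ as in \eqref{eqn: centerofmass1}--\eqref{eqn:k}. Past the branching time the two walks become independent, so $\PP(J(h)\cap J(h'))$ factors into (i) a probability for $\overline{S}_{k_b}$ taking value near $v$ while remaining under the barrier, (ii) a Gaussian cost for the symmetric part being of size $d$, and (iii) two independent probabilities (one per walk) from time $k_b$ to $t$ of landing in the target interval while staying below $M$. I would split at $k_b=t-t^{\alpha+\epsilon}$ for some fixed $0<\epsilon<1-\alpha$: below this threshold the barrier is slack and can be dropped in favor of Gaussian estimates, above it I preserve the barrier and apply the Ballot Theorem exactly as in \eqref{eqn: case3b prob1 barrier}--\eqref{eqn: case3b prob2 barrier}.

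The hard part will be verifying that the total close-regime contribution is $O(\EE[W_\delta])$ rather than $O(\EE[W_\delta]^2)$; intuitively, as $k_b\to t$ the two walks are strongly coupled and the joint event collapses onto a single one, so the diagonal dominates. Quantitatively, after a Laplace estimate on the sum over $\overline{v}=v-\mu k_b$, whose stationary point now sits at $\overline{v}_0=O(1)$ because $y<0$ removes the positive drift, the two Gaussian factors combine into a single copy of $e^{-2\sqrt{1+\theta}y-y^2/t}$ rather than its square. Summing over $k_b$ against the residual $e^{-\theta(t-k_b)}$ factor produces a geometric-type sum bounded by $O(\EE[W_\delta])$, and adding the three regimes' contributions then gives the stated identity.
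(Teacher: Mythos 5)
Your proposal is correct and takes essentially the same route as the paper, whose own proof of this lemma simply defers to the argument for Lemma \ref{lemma: lowerboundlemma2} with the modified barrier and negative $y$, attributing the main term to $|h-h'|>e^{t\theta/2}$, the $e^{-t\theta/2}O(\EE[W_\delta]^2)$ term to the intermediate range, and $O(\EE[W_\delta])$ to $|h-h'|\leq 1$ via the same branching-time and center-of-mass decomposition. The only caveat is your offhand claim that the Laplace stationary point is $O(1)$; for $y$ large and negative it sits near $y+\mu/4+1/2$, but this does not affect the structure or the conclusion.
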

	\begin{proof}[Proofs of Lemma \ref{lemma: lbofmax1} and Lemma \ref{lemma: lbofmax2}  ] 
		The proofs are similar to the proofs for Lemma \ref{lemma: lowerboundlemma1} and Lemma \ref{lemma: lowerboundlemma2}, respectively, but with the following changes:  we use a different barrier and consider negative values of $y$.  For Lemma \ref{lemma: lbofmax2}, the dominant contribution to $\EE[W_{\delta}^2]$ is $\EE[W_{\delta}]^2 + e^{-t\theta/2}O(\EE[W_{\delta}]^2)$, which comes from the range $|h - h^{'} | > e^{t\theta/2}$. The contributions from the ranges $1< |h - h^{'} | \leq e^{t\theta/2}$ and $|h - h^{'}| \leq 1$ are $e^{-t\theta/2}O( \EE[W_{\delta}]^2)$ and $O(\EE[W_\delta])$, respectively. 
	\end{proof}

	\begin{proof}[Proof of Theorem \ref{leftbound} ] 
		By the Paley-Zygmund inequality,  we have 
		\begin{align*} 
		\PP(W_\delta \geq 1) \geq \dfrac{ (\EE[W_\delta])^2}{\EE[W_{\delta}^2]} = 1- O \left( \frac{1}{ \EE[W_{\delta}]} \right)=1 - O\left(  \frac{ e^{2\sqrt{1+\theta}y} }{ 1-y } \right), 
		\end{align*} 
		since $\EE[W_\delta] \to \infty$ as $t \to \infty$ and $y \to -\infty$.

	\end{proof}

\section{Proof of Corollary 1} \label{Section: Moments} 

\begin{proof}[Proof of Corollary \ref{momentstheorem}]
	Let  $A > 0$, and let $y >0 $ with $A > y$. 
	Consider \begin{equation} \mathcal{Z}_{\beta_c} =  \frac{1}{2e^{t\theta}}  \int_{|h|\leq 
			e^{t\theta}} \exp( \beta_c S_t(h) ) \; dh.\end{equation} 
	The aim is to show that 
	\begin{equation}\label{momentbound1}
		\PP \left( \mathcal{Z}_{\beta_c} > A (\log T)^{\frac{\beta_c^2}{4}}   \right) \ll 1/A.
	\end{equation} 
	To obtain the bound in \eqref{momentbound1}, we evaluate $\mathcal{Z}_{\beta_c}$ on a good event $E_A$ that includes a barrier. Recall that $\mu = \sqrt{1+\theta} - \frac{1+2\alpha}{4\sqrt{1+\theta}} \frac{\log t}{t}$. Define 
	\begin{equation} 
		M_A(k)= \mu k + \left(1-\frac{k}{t} \right)t^{1-\alpha} + A + 1,\;\; \text{where} \;\; 0 \leq k \leq t.
		\end{equation} 
Throughout we define the event 
	\begin{equation} \label{goodevent EA} 
		E_A=  \bigcap_{k=0}^{t-1} \left\{ \max_{|h| \leq e^{t \theta}} S_{k+1} (h) \leq M_A(k+1) \right\}.
	\end{equation}  
	The construction of $E_A$ is inspired by the first hitting time of the maximum, as shown in \eqref{upperboundmainsum}. We include the barrier $M_A(k)$ due to the branching nature of the random model. Note that the event defined in line 64 of \cite{AB} does not include a barrier.  
	
To find the probability in \eqref{momentbound1}, we consider $	\PP( \{\mathcal{Z}_{\beta_c}  > A (\log T)^{\beta_c^2/4} \} \cap E_A^c)$. It is enough to find $\PP( E_A^c).$ After replacing $y$ with $A$ in Theorem \ref{upperlowerbd}, we find that
	\begin{equation} \label{EcA} 
	 \PP( E_A^c) \ll  \left( 1 + \frac{A}{t^{1-\alpha}} \right)e^{-2 \sqrt{1+\theta}A}  e^{-A^2/t}.
	\end{equation} 
	It remains to estimate the moment $\mathcal{Z}_{\beta_c}$ on the event $E_A$. 
	
	Let $V$ be a real number. We will evaluate the moments in terms of the Lebesgue measure of high points, defined as 
	\begin{equation}\label{Lebesguemeasure1}
		S(V) = \frac{1}{2 e^{t\theta}} m\{|h| \leq e^{t\theta}: S_t(h) > V, \;S_k(h) \leq M_A(k); \; \forall k \leq t \}.
	\end{equation} 
	Let $$F(V) = \frac{1}{ 2 e^{t\theta}} m\{|h| \leq e^{t\theta}: S_t(h) < V,\; S_k(h) \leq M_A(k); \;\forall k \leq t \}.$$ We write the moments in terms of $F(V)$ and use integration by parts to obtain 	
	\begin{align} 
		\mathcal{Z}_{\beta_c} = \int_{-\infty}^{+\infty} e^{\beta_c V} dF(V) &= - e^{\beta_c V} S(V) \bigg\vert_{-\infty}^{+\infty} + \int_{-\infty}^{+\infty} \beta_c e^{\beta_c V} S(V) \;dV \notag \\
		&\ll  \int_{\beta_ct / 4}^{\mu t + A}  e^{\beta_c V} S(V) \;dV.  \label{momentsintegral}
	\end{align} 
	The boundary terms are zero: when $V= - \infty$, we use the fact that $S(V) \leq 1$; when $V= + \infty$, we have that $S(V) =0$  on $E_A$. We choose $[ \frac{ \beta_c t}{4}, \mu t + A]$ as the bounds of integration: the upper limit $\mu t +A$ follows from the restriction on the maximum on $E_A$; the lower limit $ \frac{ \beta_c t}{4}$ follows since $\int_{-\infty}^{w} e^{\beta_c V} S(V) dV \leq \int_{-\infty}^{w} e^{\beta_c V}  dV$ yields a smaller bound than desired for all $-\infty< w \leq \frac{ \beta_c t}{4}$.  The integration bounds include the optimizer $V_o=\frac{ \beta_c t}{2}$.

	To evaluate the integral \eqref{momentsintegral}, we find an upper bound for $S(V)$. Let $C > 0$ be a fixed real number. By Markov's inequality, we have that $\PP(S(V) > C\; \EE[S(V)]) \leq C^{-1}$. Let $V=\mu t + y$. To bound $S(\mu t + y)$ from above, we need a bound on its expectation. The Ballot Theorem \ref{thm: ballot linear UB} yields
	\begin{equation} \label{S(V) bound} 
		S(\mu t + y) \leq C \; \EE[S(\mu t + y)] \ll \left( 1 + \frac{A}{t^{1-\alpha}} \right)(A+ |y|) e^{-t(1+\theta)} e^{-2\sqrt{1+\theta}y} e^{-y^2/t},
	\end{equation} where $|y|=O\left(\frac{t}{\log t}\right)$. 
	
	Next, we partition the set $[ \frac{ \beta_c t}{4}, \mu t + A] \cap \sqrt{t} \mathbb{Z}$ by the sequence $(V_j, 1 \leq j \leq J)$. Let $V_0= V_1 - \sqrt{t}$ and $V_{j+1} = V_j + \sqrt{t}$. 
	Then consider 
	\begin{equation} \label{Ij}
		I_j = \int_{V_j}^{V_{j+1}} e^{\beta_c V} S(V) dV,\;\; \text{for all} \;\;  0 \leq j  \leq J,
	\end{equation}
	and let 
	\begin{equation}  \label{aj}
		a_j = A \begin{cases}
			(1+( \frac{y_j}{\sqrt{t}} )^2) &\text{if} \;\;\; y_j > 0 \\ 
			(1+ ( \frac{y_{j+1}}{\sqrt{t}} )^2)  &\text{if} \;\;\; y_{j+1} < 0 \\ 
			1	&\text{if} \;\;\; y_j < 0 < y_{j+1},\\ 
		\end{cases}
	\end{equation} 
	so that $a_j \leq (1+(\frac{y}{\sqrt t})^2)$ for $y \in [y_j, y_{j+1}]$. 
	
	Using \eqref{Ij} and \eqref{aj}, 
define the events $$E_j = \left \{ I_j \leq a_j \; \EE[I_j] \right\}, \;\; \text{for all} \;\; 0 \leq j \leq J. $$ 
	Now, let $$G = E_A \cap ( \cap_j E_j).$$ Then by Markov's inequality and by \eqref{EcA}, the probability of $G^c$ is 
	
	\begin{equation} \PP(G^c) \ll   \left(1+\frac{A}{t^{1-\alpha}} \right) e^{-2\sqrt{1+\theta}A}e^{-A^2/t} + \sum_j a_j^{-1}  \ll \frac{1}{A}.
	\end{equation}  It remains to estimate the moments on the event $G$. We thus far have 
	\begin{equation} \label{eqnmoments}
		\int_{\beta_c t/ 4 }^{\mu t+A} e^{\beta_c V} S(V) \;dV \ll \sum_j a_j \int_{V_j}^{V_{j+1}} e^{\beta_c V} \EE[S(V)] dV.
	\end{equation} 
We perform a change of variable by letting $V= \mu t + y$  (with $y_j = V_j - \mu t)$. Next, we apply the bound for $\EE[S( \mu t + y)]$ from \eqref{S(V) bound}. It follows that 
	\begin{align*} 				
		&\ll	e^{2\sqrt{1+\theta} (\mu t + y)}  e^{-t(1+\theta)}  \left(1+ \frac{A}{t^{1-\alpha}}\right) \sum_j a_j \int_{y_{j}}^{y_{j+1}}(A+ |y|) e^{-2\sqrt{1+\theta}y} e^{-y^2/t} \;dy \\
		&\ll \frac{ \left(1+ \frac{A}{t^{1-\alpha}}\right) e^{t(1+\theta)}}{t^{\alpha-1/2}} \int_{\frac{\beta_c t}{4} - \mu t }^{A} \left(1+\left(\frac{y}{\sqrt t}\right)^2\right) \frac{(A+ |y|)}{\sqrt t}  \frac{e^{-y^2/t}}{\sqrt t} \;dy \\
		&= \frac{ \left(1+ \frac{A}{t^{1-\alpha}}\right) e^{t(1+\theta)}}{t^{\alpha-1/2}} \int_{-\frac{\beta_c \sqrt{t}}{4} + o(1)}^{o(1)} (1 + |u|^2)\; |u+o(1)|e^{-u^2} du \\
		&\ll_A \frac{e^{\beta_c^2/4}}{t^{\alpha-1/2}}.
	\end{align*}
	The last line follows since $\beta_c= 2\sqrt{1+\theta}$. This proves Corollary \ref{momentstheorem}. 
	
\end{proof}

\begin{appendix}
	
	\section{Gaussian Ballot Theorem}
	Let $0<\delta\leq 1$. Define the intervals $I_x=(x,x+\delta]$ for $x>0$, $I_x=(x-\delta,x]$ if $x<0$, and $I_x=[-\delta,\delta]$ if $x=0$. The propositions are from the appendix in \cite{ADH}. We omit the proofs. 
	
	\begin{proposition}[Ballot theorem with linear barrier: Lower bound]\label{thm: ballot linear LB}
		Let $(S_j,j\geq 1)$ be a random walk with Gaussian increments of mean $0$ and variance $\sigma_j^2$ with $c^{-1}<\sigma_j<c$ for some fixed $c\geq 1$. Define the linear barrier 
		$$ 
		b(j) = aj+b(0).
		$$
		Then, for any $j\geq 1$, $(b(j)-x)\cdot b(0)\leq j$, we have
		\begin{equation}\label{ballot_inequality_lin}
			\PP \left( \forall 1\leq \ell \leq j, \ S_\ell \leq  b(\ell), \ S_j \in I_x\right)
			\gg \frac{b(0) ( b(j) -x)}{ j^{3/2} } \delta e^{- \frac{cx^2}{j} }.
		\end{equation}
	\end{proposition}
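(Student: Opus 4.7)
The plan is to condition on the endpoint $S_j$ and multiply together two factors: the Gaussian density of $S_j$ near $x$, and a reflection-type estimate for a pinned walk staying below the barrier. Writing $u = b(0)$ and $v = b(j) - x$, the goal reduces to producing the factor $uv/j$ from the barrier event and the factor $\delta\, j^{-1/2} e^{-cx^2/j}$ from the endpoint density.

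First I would decompose
$$\PP\bigl(\forall\,1\le\ell\le j,\ S_\ell\le b(\ell),\ S_j\in I_x\bigr)=\int_{I_x}\PP\bigl(\forall\,1\le\ell\le j-1,\ S_\ell\le b(\ell)\,\big|\,S_j=y\bigr)\,f_{S_j}(y)\,dy.$$
Since the increments are Gaussian with variances in $[c^{-1},c]$, the total variance $V_j=\sum_{m\le j}\sigma_m^2$ satisfies $V_j\asymp j$, so $f_{S_j}(y)\asymp j^{-1/2}e^{-cx^2/j}$ uniformly for $y\in I_x$, contributing the density factor $\delta\,j^{-1/2}e^{-cx^2/j}$ after integration.

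For the conditional probability, under the conditioning $S_j=y$ the process $(S_\ell)_{0\le\ell\le j}$ is a Gaussian bridge from $0$ to $y$ with conditional mean $(V_\ell/V_j)\,y$ and conditional variance $V_\ell(V_j-V_\ell)/V_j=\Theta(\ell(j-\ell)/j)$. Centering $\tilde S_\ell=S_\ell-(V_\ell/V_j)y$ and using $b(\ell)=a\ell+b(0)$ with $a=(y+v-u)/j$, the barrier for $\tilde S_\ell$ becomes the linear interpolation $(1-V_\ell/V_j)\,u+(V_\ell/V_j)\,v$ (up to $O(1)$ error coming from replacing $y$ by $x$ and from the mismatch between $V_\ell/V_j$ and $\ell/j$, both harmless since $\sigma_m$ is bounded). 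Applying the reflection principle for the Brownian bridge gives
$$\PP\bigl(\forall\,\ell,\ \tilde S_\ell\le (1-V_\ell/V_j)u+(V_\ell/V_j)v\bigr)\gg 1-\exp\!\Bigl(-\tfrac{2uv}{\bar\sigma^2 j}\Bigr)\asymp \frac{uv}{j},$$
where the last equivalence uses precisely the hypothesis $uv=(b(j)-x)\,b(0)\le j$, which keeps the exponent bounded and makes $1-e^{-2uv/(\bar\sigma^2 j)}$ comparable to $2uv/(\bar\sigma^2 j)$. Multiplying the conditional probability by the density factor gives the claimed bound $\gg b(0)(b(j)-x)\,\delta\,j^{-3/2}e^{-cx^2/j}$.

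The main obstacle is transferring the Brownian-bridge reflection estimate rigorously to the discrete-time Gaussian bridge. Two standard routes are available: one can either embed the walk into a Brownian motion (Skorokhod/KMT-style, trivial here because increments are themselves Gaussian, so the embedding is by an explicit time change of a Brownian motion) and bound the discretization error, or verify the reflection bound directly by a pathwise symmetry argument on the explicit Gaussian joint density. In either approach the only quantitative input needed is the bound $c^{-1}\le\sigma_j\le c$, which keeps all the implicit constants uniform in the parameters $b(0)$, $b(j)-x$, $x$, and $j$ within the regime $(b(j)-x)\,b(0)\le j$.
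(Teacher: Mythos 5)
First, note that the paper itself gives no proof of this proposition: it states that the ballot theorems ``are from the appendix in \cite{ADH}. We omit the proofs.'' So the comparison can only be against the standard argument (which is what ADH's appendix executes), and your architecture --- condition on the endpoint, extract the density factor $\delta\,j^{-1/2}e^{-cx^2/j}$, and get the factor $b(0)(b(j)-x)/j$ from a reflection estimate for the resulting Gaussian bridge against a linear barrier --- is indeed that standard argument. The skeleton is right.

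There are, however, two concrete gaps in the execution. (1) You claim the mismatch between $V_\ell/V_j$ and $\ell/j$ is ``harmless since $\sigma_m$ is bounded.'' Boundedness $c^{-1}<\sigma_m<c$ only gives $|V_\ell/V_j-\ell/j|=O(1)$, not $o(1/|x|)$; after centering, the barrier for $\tilde S_\ell$ picks up the term $x\,(\ell/j - V_\ell/V_j)$, which is $O(|x|)$, and in the intended application $x$ is of order $\mu j$, i.e.\ of order $j$. An order-$j$ distortion of the barrier is fatal (it can push the bridge's mean path far above or far below the chord), so this step needs the quantitatively stronger input that the variances are $1/2$ up to errors summable in $m$ (which is what holds for the $Y_m$ via Mertens), not merely bounded. (2) For a \emph{lower} bound you cannot absorb ``$O(1)$ errors'' into the barrier by shifting it down: the answer is proportional to $b(0)$, and the hypothesis $(b(j)-x)b(0)\le j$ is exactly the regime where $b(0)$ may be of order $1$, in which case lowering the continuous barrier by a constant makes it negative at time $0$ and the reflection formula $1-e^{-2uv/(\bar\sigma^2 j)}$ degenerates. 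The standard repair is to pay a constant by conditioning the first step into a fixed negative window (so the walk starts strictly below the barrier by a definite amount) and only then invoke the bridge/reflection estimate, or to prove the discrete ballot bound directly (Bramson-style) rather than by comparison with the continuous bridge. Both fixes are routine but are precisely where the factor $b(0)$ is either gained or lost, so they cannot be waved away.
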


	\begin{proposition}[Ballot theorem with linear barrier: Upper bound]\label{thm: ballot linear UB}
		Let $(S_j,j\geq 1)$ be a random walk with Gaussian increments of mean $0$ and variance $1/2$. Define the linear barrier 
		$$ 
		b(j) = aj+b(0).
		$$
		Then, for any $j\geq 1$, $b(0)> 0$ and $x \leq  b(j)$, we have
		\begin{equation}\label{ballot_inequality2}
			\PP \left( \forall 1\leq \ell \leq j, \ S_\ell \leq b(\ell), \ S_j \in I_x\right)
			\ll
			\frac{(b(0)+1) (b(j) -x+1 )}{ j^{3/2} }.
		\end{equation}
	\end{proposition}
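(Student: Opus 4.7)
The plan is to reduce the linear-barrier problem to a constant-barrier problem for a driftless random walk, and then extract the resulting estimate from the reflection principle for Brownian motion.

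For the reduction, define $\tilde S_\ell := S_\ell - a\ell$. Then $(\tilde S_\ell)_{\ell \geq 0}$ is a driftless Gaussian random walk with i.i.d.~$N(0,1/2)$ increments. Under this substitution, the event $\{S_\ell \leq b(\ell)\ \forall\, 1 \leq \ell \leq j\}$ becomes $\{\tilde S_\ell \leq b(0)\ \forall\, 1 \leq \ell \leq j\}$, and the endpoint event $\{S_j \in I_x\}$ becomes $\{\tilde S_j \in I_{x-aj}\}$. Writing $u := b(0)$ and $v := x - aj$, one checks that $u - v = b(j) - x \geq 0$, so the proposition reduces to proving
$$\PP\bigl(\tilde S_\ell \leq u\ \forall\, 1 \leq \ell \leq j,\ \tilde S_j \in I_v\bigr) \ll \frac{(u+1)(u-v+1)}{j^{3/2}},$$
uniformly for $u > 0$ and $v \leq u$.

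For the constant-barrier bound, since the increments of $\tilde S$ are Gaussian we may couple $\tilde S_\ell$ with a Brownian motion $(B_s)_{s \geq 0}$ of variance $s/2$ at time $s$, setting $\tilde S_\ell = B_\ell$; the discrete barrier event is then dominated by its continuous counterpart. The reflection-principle identity
$$\PP\bigl(\sup_{0 \leq s \leq j} B_s \leq u,\ B_j \in dw\bigr) = \bigl(p_j(0,w) - p_j(0, 2u - w)\bigr)\,dw,$$
where $p_j$ is the Gaussian density at time $j$, gives upon integration over $I_v$ (an interval of length at most $1$)
$$\PP\bigl(\sup_{s \leq j} B_s \leq u,\ B_j \in I_v\bigr) \ll \frac{1}{\sqrt{j}}\bigl(1 - e^{-2u(u-v)/j}\bigr) e^{-v^2/j} \ll \frac{u(u-v)}{j^{3/2}},$$
valid uniformly whenever $u$ and $u-v$ are both at least $1$.

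The main obstacle, and the source of the $+1$ corrections in the proposition, is the boundary regime where $u$ or $u-v$ is of order one; there the reflection estimate degenerates. I would handle the case $u = O(1)$ by conditioning on $\tilde S_1$: a one-step Gaussian estimate shows that with positive probability $\tilde S_1 \leq -1$, after which the Markov property reduces the problem to the same ballot estimate for a walk of length $j-1$ starting with a barrier margin of order $O(1)$, so the factor $(u+1)$ absorbs the blow-up. A symmetric time-reversal argument at the endpoint handles the case $u - v = O(1)$. Combining the three regimes yields the stated uniform upper bound.
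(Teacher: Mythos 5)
The paper itself omits the proof of this proposition, referring to the appendix of \cite{ADH}, so there is no internal argument to compare yours against; I can only assess the proposal on its own terms. The overall strategy (straighten the linear barrier, then invoke a ballot/reflection estimate) is the standard one, but two of your steps are incorrect as written. First, the reduction: setting $\tilde S_\ell = S_\ell - a\ell$ does convert the barrier $b(\ell)$ into the constant $b(0)$, but it does \emph{not} produce a driftless walk. Since $S$ already has mean-zero increments, $\tilde S$ has i.i.d.\ increments of mean $-a$ and variance $1/2$, so your subsequent driftless reflection computation does not apply (and in the application $a=\mu\approx\sqrt{1+\theta}$ is of order one, so the drift is not negligible). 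The missing ingredient is an exponential change of measure: for Gaussian increments the Radon--Nikodym derivative of the drifted walk with respect to the driftless one depends only on the endpoint and equals $\exp(-2av - ja^2)$ on $\{\tilde S_j \in dv\}$; one then checks $e^{-2av-ja^2}e^{-v^2/j}=e^{-(v+aj)^2/j}=e^{-x^2/j}\le 1$, which is precisely where the hypothesis $x\le b(j)$ and the absence of any Gaussian factor in the stated conclusion get used. Without this step the argument is simply about a different process.

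Second, your comparison with Brownian motion runs in the wrong direction. Embedding $\tilde S_\ell = B_\ell$, the continuous constraint $\sup_{0\le s\le j}B_s\le u$ \emph{implies} the discrete constraint $\tilde S_\ell\le u$ for all integer $\ell\le j$; the discrete event is therefore the larger one, and $\PP(\text{discrete})\ge\PP(\text{continuous})$. The reflection identity thus gives an upper bound only for the smaller, continuous event, not for the probability you need. An upper bound for the discrete ballot probability requires a genuinely discrete argument, for instance splitting the walk at time $\lfloor j/2\rfloor$, using independence of the two halves together with the one-sided estimate $\PP(\max_{\ell\le m}\tilde S_\ell\le u)\ll (u+1)/\sqrt{m}$ (itself obtained from discrete reflection or fluctuation theory) applied to the first half and to the time-reversal of the second half; this is also how the factors $(b(0)+1)$ and $(b(j)-x+1)$ arise naturally, without the separate case analysis you sketch for $u=O(1)$ and $u-v=O(1)$. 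Those boundary-regime patches are reasonable in spirit, but they sit on top of the two broken steps above.
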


	\begin{proposition}[Ballot theorem with logarithmic barrier]\label{thm_ballot}
		Let $(S_j,j\leq t)$ be a random walk with Gaussian increments of mean $0$ and variance $1/2$. Define the logarithmic barrier 
		$$ 
		\psi_j =  \log \left(  \min (j,t-j) \right).
		$$
		Let $t/\log t\leq k\leq t$, $y=o(t/\log t)$, $-20k<x\leq \psi_k$.
		For $r=y$, we have
		\begin{equation}\label{ballot_inequality_log}
			\PP \left( \forall r<j \leq k, \ S_j\leq y + \psi_j, \ S_k \in I_x, |S_r|\leq 3r \right)
			\ll
			\frac{(y+1) (y + \psi_k -x+1 )}{ k^{3/2} } e^{- \frac{x^2}{k} }.
		\end{equation}
	\end{proposition}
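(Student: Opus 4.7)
The plan is to reduce Proposition \ref{thm_ballot} to the linear ballot theorem (Proposition \ref{thm: ballot linear UB}) via a Markov decomposition of the time interval $(r,k]$ combined with a linearization of the concave logarithmic barrier $\psi_j=\log\min(j,t-j)$. First I would use the Markov property at time $r$ to separate the initial data, conditioning on the value $S_r$ with $|S_r|\le 3r$. After a translation this reduces, for a fresh Gaussian walk $\tilde S$ of variance $1/2$, to controlling
$$\PP\bigl(\forall\, 0<j\leq k-r,\ \tilde S_j \leq (y-S_r)+\psi_{j+r},\ \tilde S_{k-r}\in I_{x-S_r}\bigr).$$
The role of the hypothesis $r=y$ with $|S_r|\leq 3r$ is to ensure that the effective starting height $(y-S_r)+\psi_{j+r}$ is of order $y+1$ for small $j$, which will eventually supply the factor $(y+1)$ in the conclusion.

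Second, I would split the remaining interval $(r,k]$ at the midpoint $m=\lfloor k/2\rfloor$ and apply the Markov property again by conditioning on $u=S_m$. Since $\psi_j$ is concave on each of $(r,m]$ and $(m,k]$, it lies below the line connecting its endpoint values up to an $O(1)$ error, so Proposition \ref{thm: ballot linear UB} applies on each half with an enveloping linear barrier. Integrating out $u$ against the Gaussian bridge density (conditional mean $(m/k)x$, variance of order $m(k-m)/k$) produces the Gaussian factor $e^{-x^2/k}$ and the $k^{-3/2}$ normalization, while the two linear-ballot prefactors combine to give the product $(y+1)(y+\psi_k-x+1)$ claimed in \eqref{ballot_inequality_log}.

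The main obstacle will be the bookkeeping in the integration step: naively, each of the two halves contributes its own polynomial factors and one has to check that no stray $\log k$ correction survives. The standard remedy is a dyadic refinement, iterating the halving procedure $O(\log k)$ times and applying the linear ballot theorem on each scale; concavity of $\psi$ then makes the product of endpoint contributions telescope to the two boundary factors $(y+1)$ and $(y+\psi_k-x+1)$, with all intermediate factors being absorbed into the Gaussian density. The hypotheses $k\geq t/\log t$ and $y=o(t/\log t)$ ensure that $\psi_k$ and $y$ are both $o(k)$, so the exponent $e^{-x^2/k}$ remains dominant and the subpolynomial corrections from the dyadic iteration do not spoil the sharp form of the bound.
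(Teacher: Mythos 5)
First, note that the paper does not actually prove Proposition \ref{thm_ballot}: it is quoted verbatim from the appendix of \cite{ADH} with the proofs omitted, so there is no in-paper argument to compare against. Your overall strategy (Markov decomposition plus reduction to the linear ballot theorem, Proposition \ref{thm: ballot linear UB}) is the standard route and is in the right spirit.

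There is, however, a concrete error at the heart of your second step. You assert that since $\psi_j$ is concave on each half-interval, it ``lies below the line connecting its endpoint values up to an $O(1)$ error.'' Concavity gives exactly the opposite inequality: $\psi_j$ lies \emph{above} the chord, and by an amount that is not $O(1)$. For instance, on $(r,m]$ with $r=y$ small and $m\asymp k$, at $j=\sqrt{rm}$ one has $\psi_j=\tfrac12(\log r+\log m)$ while the chord value is $\log r+O(\sqrt{r/m}\,\log(m/r))$, a gap of order $\log k$. Consequently, replacing $\psi$ by the chord \emph{tightens} the constraint and would only yield a lower bound on the probability, not the upper bound \eqref{ballot_inequality_log}. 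Your proposed dyadic remedy is indeed the standard fix (on a dyadic block $\psi$ varies by only $O(1)$), but the phrase ``concavity makes the product of endpoint contributions telescope'' glosses over the actual content of the proof: one must show that the extra room $\log\min(j,t-j)$ the barrier grants at intermediate times costs only a bounded multiplicative factor. The usual way to do this is to decompose over the (first or last) dyadic scale at which the walk comes within $O(1)$ of the barrier and verify that the contribution at scale $2^i$ decays summably in $i$ — essentially an entropic-repulsion estimate for the Gaussian bridge, showing the bridge conditioned to stay below a constant typically sits at depth $-\sqrt{\min(j,k-j)}$, far below $y+\psi_j$. Without that estimate the bound $(y+1)(y+\psi_k-x+1)k^{-3/2}e^{-x^2/k}$, with no stray $\log k$, is not established. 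I would also flag that $\psi_j=\log\min(j,t-j)$ switches branches at $j=t/2$ and is decreasing near $j=k$ when $k$ is close to $t$, so the linearization on the second half needs to be set up with the endpoint factor $\psi_k$ (which can be small, consistent with the hypothesis $x\le\psi_k$) rather than with a generic increasing chord.
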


\end{appendix}
\newpage

\bibliographystyle{alpha.bst}
\bibliography{refs}
\address

\end{document}